\definecolor{gr}{rgb}   {0.,   0.69,   0.23 }
\definecolor{bl}{rgb}   {0.,   0.5,   1. }
\definecolor{mg}{rgb}   {0.85,  0.,    0.85}
\definecolor{yl}{rgb}   {0.8,  0.7,   0.}
\definecolor{or}{rgb}  {0.7,0.2,0.2}
\newtheorem{theorem}{Theorem} [section]
\newtheorem{lemma}[theorem]{Lemma}
\newtheorem{remark}[theorem]{Remark}
\newtheorem{corollary}[theorem]{Corollary}
\newcommand{\noi}{\noindent}
\newcommand{\Z}{\mathbb{Z}}
\newcommand{\R}{\mathbb{R}}
\newcommand{\T}{\mathbb{T}}
\let\P= \undefined
\newcommand{\P}{\mathbf{P}}
\newcommand{\E}{\mathbb{E}}
\newcommand{\F}{\mathcal{F}}
\newcommand{\Nf}{\mathfrak{N}}
\newcommand{\al}{\alpha}
\newcommand{\be}{\beta}
\newcommand{\dl}{\delta}
\newcommand{\nb}{\nabla}
\newcommand{\Dl}{\Delta}
\newcommand{\eps}{\varepsilon}
\newcommand{\g}{\gamma}
\newcommand{\s}{\sigma}
\newcommand{\ft}{\widehat}
\newcommand{\wt}{\widetilde}
\newcommand{\cj}{\overline}
\newcommand{\dt}{\partial_t}
\renewcommand{\l}{\ell}
\renewcommand{\o}{\omega}
\newcommand{\les}{\lesssim}
\newcommand{\ges}{\gtrsim}
\newcommand{\jb}[1]
{\langle #1 \rangle}
\newcommand{\ind}{\mathbf 1}
\newcommand{\N}{\mathbb{N}}
\newcommand{\NN}{\mathcal{N}}
\renewcommand{\H}{\mathcal{H}}
\newtheorem*{ackno}{Acknowledgements}
\numberwithin{equation}{section}
\numberwithin{theorem}{section}
\tikzset{
	dot/.style={circle,fill=black,draw=black,inner sep=0pt,minimum size=0.5mm},
	>=stealth,
	}
\tikzset{
	dot2/.style={circle,fill=black,draw=black,inner sep=0pt,minimum size=0.2mm},
	>=stealth,
	}
\tikzset{
	ddot/.style={circle,fill=white,draw=black,inner sep=0pt,minimum size=0.8mm},
	>=stealth,
	}
\tikzset{decision/.style={ % requires library shapes.geometric
        draw,
        diamond,
        aspect=1.5
    }}
\tikzset{dia2/.style
={diamond,fill=white,draw=black,inner sep=0pt,minimum size=1mm},
	>=stealth,
	}
\tikzset{dia/.style
={star,fill=black,draw=black,inner sep=0pt,minimum size=1mm},
	>=stealth,
	}
\tikzset{dia/.style
={diamond,fill=black,draw=black,inner sep=0pt,minimum size=1.3mm},
	>=stealth,
	}
\def\<#1>{\xusebox{#1}}
\newcommand{\too}{\longrightarrow}
\begin{document}
\baselineskip = 13pt

\title[Randomization of a general function
of  negative regularity]
{
A remark on randomization of a general function
of  negative regularity
} 
\author[T.~Oh , M.~Okamoto, O.~Pocovnicu, and   N.~Tzvetkov]
{Tadahiro Oh, Mamoru Okamoto, Oana Pocovnicu,  and Nikolay Tzvetkov}

\address{
Tadahiro Oh, School of Mathematics\\
The University of Edinburgh\\
and The Maxwell Institute for the Mathematical Sciences\\
James Clerk Maxwell Building\\
The King's Buildings\\
Peter Guthrie Tait Road\\
Edinburgh\\ 
EH9 3FD\\
 United Kingdom}

\email{hiro.oh@ed.ac.uk}

\address{
Mamoru Okamoto\\
Department of Mathematics\\
 Graduate School of Science\\ Osaka University\\
Toyonaka\\ Osaka\\ 560-0043\\ Japan}
\email{okamoto@math.sci.osaka-u.ac.jp}

\address{
Oana Pocovnicu\\
Department of Mathematics, Heriot-Watt University and The Maxwell Institute for the Mathematical Sciences, Edinburgh, EH14 4AS, United Kingdom}
\email{o.pocovnicu@hw.ac.uk}

\address{
Nikolay Tzvetkov\\
Ecole Normale Sup\'erieure de Lyon, UMPA, UMR CNRS-ENSL 5669, 46, all\'ee d'Italie\\
 69364-Lyon Cedex 07, France}

\email{nikolay.tzvetkov@ens-lyon.fr}

\subjclass[2020]{35R60, 35L05, 60H15}
\keywords{probabilistic well-posedness; Wick power;  random initial data; stochastic forcing; 
Fourier-Lebesgue space}

\begin{abstract}

In the study of  partial differential equations (PDEs) with random initial data 
and singular stochastic PDEs with random forcing, 
we typically decompose a classically ill-defined solution map
into two steps, 
where, in the first step, we use stochastic analysis to construct 
various stochastic objects.
The simplest kind of such stochastic objects
is the Wick powers of a basic stochastic term
(namely a random linear solution, a stochastic convolution, 
or their sum).
In the case of randomized initial data of a general function of negative regularity
for studying nonlinear wave equations (NLW), 
we show
necessity of  imposing additional Fourier-Lebesgue regularity
for constructing Wick powers
by 
exhibiting  examples of functions slightly outside $L^2(\T^d)$
such that the associated  Wick powers do not exist.
This shows that probabilistic well-posedness theory for NLW
with general randomized initial data  fails in negative Sobolev spaces
(even with renormalization).
Similar examples also apply to stochastic NLW
and stochastic nonlinear heat equations
with 
general white-in-time stochastic forcing, 
showing  necessity of appropriate Fourier-Lebesgue $\g$-radonifying regularity
in the construction of the Wick powers of 
the associated stochastic convolution.

\end{abstract}

%\vspace*{-1mm}

%\date{\today}
%%
%
\maketitle
\tableofcontents

%\vspace{-6mm}

\section{Introduction}

Over the last decade, there has been a significant progress
in the study of random dispersive PDEs, 
broadly interpreted with random initial data and/or stochastic forcing.
This study was initiated by Bourgain \cite{BO94, BO96}
in the construction of invariant Gibbs measures for 
the nonlinear Schr\"odinger equations (NLS)
and was further developed by Burq and the fourth author
\cite{BT1, BT3}
in the context of the nonlinear wave equations (NLW); see also \cite{CO}.
See~\cite{BOP4} for a  survey on the subject.
In recent years,
we have witnessed a rapid progress 
\cite{OTh2, GKO, OTW,  OPTz, GKOT, OOR, OOcomp, ORSW, ORSW2, 
ORW,  OWZ, STX, LTW}
in
probabilistic well-posedness theory for nonlinear dispersive PDEs
in the {\it singular} setting,\footnote{Namely, when a solution at a fixed time is not a function but only a distribution.}
culminated
in 
 the paracontrolled approach to NLW \cite{GKO2, OOT1, Bring, OOT2, BDNY}
and the introduction of 
random averaging operators and random tensors for NLS \cite{DNY2, DNY3}.

We point out that, 
in the singular setting, 
with the exception of the stochastic KdV equation studied in \cite{DDT, Oh4}
(see also \cite{FOW}), 
all the known probabilistic well-posedness results 
on nonlinear dispersive PDEs (including those with stochastic forcing)
are essentially limited to

\begin{itemize}
\item Gaussian free field initial data, white noise initial data, 
or their smoothed (or differentiated) versions in the case of random initial data;
see, for example, \eqref{rand3} below.

\smallskip

\item space-time white noise 
or its smoothed (or differentiated) version in the case of stochastic forcing;\footnote{We restrict our discussion to the white-in-time case.}
see, for example, \cite{OOcomp, OWZ}.

%$\xi$ or 
%$\jb{\nb_x}^\al \xi$
\end{itemize}

\noi
In this paper, we investigate 
issues related to 
general randomized initial data
% randomization of a general function 
 and general stochastic forcing
in the singular setting.
In particular, 
we 
 consider the deterministic NLW
on the  periodic torus $\T^d = (\R/2\pi \Z)^d$:\footnote{The equation \eqref{NLW0}
is also referred to as the nonlinear Klein-Gordon equation.
We, however, simply refer to \eqref{NLW0} as NLW in the following.
Moreover, we only consider real-valued functions in the following.
For a renormalization in  the complex-valued case, see 
 \cite{OTh1}.}
\begin{equation}
%\begin{cases}
\partial_t^2 u +(1-\Delta) u+u^k =0
%(u, \dt u) |_{t = 0}  = (u_0,u_1)\in \H^s(\T^d), 
%\end{cases}
\label{NLW0}
\end{equation}

\noi
with random initial data, 
and the stochastic NLW (SNLW) on $\T^d$:
\begin{equation}
\partial_t^2 u +(1-\Delta) u+u^k = \Phi \xi, 
\label{NLW0a}
\end{equation}

\noi
where $k \ge 2$ is an integer
and the unknown function $u$ is real-valued.
In \eqref{NLW0a}, 
$\xi$ denotes the (Gaussian) space-time white noise
whose space-time covariance is formally given by 
\[ \E\big[\xi(t_1, x_1)\xi(t_2, x_2) \big] = \dl(t_1 - t_2) \dl(x_1 - x_2),\]

\noi
and $\Phi$ is a linear operator on $L^2(\T^d)$.
For conciseness of the presentation, 
we will only discuss details in the case of random initial data
in the following.
Analogous results hold in the case of general stochastic forcing;
see Subsection \ref{SUBSEC:sto}.

\subsection{Randomization of a general function}

In  \cite{BT1, BT3}, 
Burq and the fourth author studied
well-posedness of NLW \eqref{NLW0}
with randomization of general functions as initial data.
In the current setting of $\T^d$, 
given a pair of deterministic functions\footnote{By convention, 
 we endow $\T^d$ with the normalized Lebesgue measure $dx_{\T^d}= (2\pi)^{-d} dx$.}
%\footnote{In the following, we drop
%the harmless factor $2\pi$.}
\begin{equation}
u_0 = \sum_{n \in \Z^d} a_n e^{in\cdot x}
\qquad\text{and}\qquad 
u_1 = \sum_{n \in \Z^d} b_n e^{in\cdot x}
\label{rand1}
\end{equation}

\noi
with the constraint $a_{-n} = \cj{a_n}$ and $b_{-n} = \cj{b_n}$, $n \in \Z^d$, 
we consider the randomized initial data $(u_0^\o, u_1^\o)$ given by 
\begin{equation}
u_0^\o = \sum_{n \in \Z^d} g_n(\o)a_n e^{in\cdot x}
\qquad\text{and}\qquad 
u_1^\o = \sum_{n \in \Z^d}h_n(\o)b_n e^{in\cdot x}, 
\label{rand2}
\end{equation}

\noi
where the series   $\{ g_n \}_{n \in \Z^d}$ and  $\{ h_n \}_{n \in \Z^d}$ are two families of 
independent standard   complex-valued  Gaussian random variables  
%on a probability space $(\O, \F, P)$ 
conditioned that
%\footnote{In particular, 
%$g_0$ and $h_0$ are real-valued.}  
$g_{-n}=\cj {g_{n}}$
and   $h_{-n}=\cj {h_{n}}$, $n \in \Z^d$.
In~\cite{BT1, BT3}, 
the authors considered
a more general class of random variables
 $\{ g_n,  h_n \}_{n \in \Z^d}$, 
 satisfying some (exponential) moment bound.
In the following, however, 
we restrict our attention to the Gaussian case.

Given $s \in \R$, let $\H^s(\T^d) = H^s(\T^d)\times H^{s-1}(\T^d)$, 
where 
$H^s(\T^d)$ denotes the standard $L^2$-based Sobolev space on $\T^d$, 
 endowed with the norm:
\[
\|f\|_{H^s(\T^d)}=\|\jb{n}^s\ft{f}(n)\|_{\l^2(\Z^d)}, \qquad \jb{\,\cdot\,} = (1+|\cdot|^2)^\frac{1}{2}.
\]

\noi
It is well known that if $(u_0, u_1) \in \H^s(\T^d)$, 
then 
 the randomized pair $(u_0^\o, u_1^\o)$
 is 
almost surely in $\H^s(\T^d)$.
Moreover, if $(u_0, u_1) \notin \H^{s+\eps}(\T^d)$
for some $\eps > 0$,
then
 $(u_0^\o, u_1^\o)
  \notin \H^{s+\eps}(\T^d)$ almost surely; see \cite[Lemma B.1]{BT1}.
While  there is no smoothing upon randomization in terms of differentiability in general, 
this randomization provides better integrability;
if $u_0  \in H^s(\T^d)$,
then  the randomized function $u_0^\o$ 
almost surely 
belongs to   $W^{s,p}(\T^d)$ for any finite $p \geq 1$.
This gain of integrability plays a crucial role
in proving probabilistic well-posedness of \eqref{NLW0}
for randomized initial data of supercritical  but {\it non-negative}\footnote{Here, we consider the regularity for $u_0$.} regularity, 
where the Cauchy problem is
known to be (deterministically) ill-posed \cite{CCT,  OOTz, FOk}.
See \cite{BT1, BT3, BTTz, OP2} for probabilistic well-posedness results on $\T^d$.
See also 
\cite{LM, Poc, OP1} for analogous results on $\R^d$.

Next, we consider  the case of negative regularity.
In this case, the known probabilistic well-posedness results
  \cite{OTh2, OPTz, OOTz, OZ, LP}
are limited to the random initial data of the following form:
\begin{equation}
\varphi_0^\o = \sum_{n \in \Z^d} \frac{g_n(\o)}{\jb{n}^{1+\al}}e^{in\cdot x}
\qquad\text{and}\qquad 
\varphi_1^\o = \sum_{n \in \Z^d} \frac{h_n(\o)}{\jb{n}^{\al}}e^{in\cdot x}.
\label{rand3}
\end{equation}

\noi
When $\al = 0$, $\varphi_0^\o$ corresponds to the massive Gaussian free field on $\T^d$, 
while $\varphi_1^\o$ corresponds to the (Gaussian) white noise on $\T^d$.
See \cite{BO96, CO, OTW, DNY2, DNY3}
in the case of NLS.
It is easy to see that  $(\varphi_0^\o, \varphi_1^\o)
\in \H^\s(\T^d)\setminus \H^{\al +1 - \frac d2}(\T^d)$
for any 
\begin{align}
\s < s(d, \al) \stackrel{\text{def}}{=} \al +1 - \frac d2
\label{al0}
\end{align}

\noi
and thus we restrict our attention to $\al \le \frac d 2- 1$ (such that $\s < 0$).
In this case, the random linear solution $Z(t)$ defined by 
\begin{align}
Z(t) =  \cos(t \jb{\nb}) \varphi_0^\o + \frac{\sin(t\jb{\nb})}{\jb{\nb}} \varphi_1^\o
\label{rand4}
\end{align}

\noi
is merely a distribution (for each $t \in \R$).
Indeed, 
by letting $Z_N = \P_N Z$, 
where $\P_N$ denotes the frequency projection onto frequencies $\{|n|\le N\}$, 
it follows from $\al \le \frac d 2- 1$ that\footnote{Due to the translation-invariance 
of the law of $Z(t, x)$, $\s_N$ is independent of $t \in \R$ and $x \in \T^d$.} 
\begin{align}
\s_N \stackrel{\text{def}}{=} \E\big[Z_N^2(t, x)\big]
= \sum_{\substack{n \in \Z^d\\|n|\le N}} \frac {\ind_{|n| \le N}}{\jb{n}^{2(1+\al)}}\too \infty, 
\label{sig1}
\end{align}

\noi
as $N \to \infty$.
As a result, 
we expect that a solution $u(t)$ is a distribution (for fixed $t$)
and thus the nonlinearity $u^k(t)$ in \eqref{NLW0} does not make sense, 
which necessitates us to renormalize the nonlinearity.
See the introductions in~\cite{OTh2, GKO, OOTz}.

In the following, for simplicity, we assume that 
 $\al$ is sufficiently close to $ \frac d2 - 1$.
Given $j \in \N$, 
define the truncated Wick  power:
\begin{equation}
:\!Z_N^j (t, x)\!:\, \,  =  H_j (Z_{N}(t, x); \s_N), 
\label{Wick1}
\end{equation}

\noi
where $H_j$ is the Hermite polynomial of degree $j$
and $\s_N$ is as in \eqref{sig1}.
Then, arguing as in \cite{GKO2, OPTz, OOTz}, 
the truncated Wick  power
$:\!Z_N^j \!:$ converges to 
a limit, denoted by  $:\!Z^j\!:$, 
in $C([0, T]; W^{j s(\al, d) - \eps, \infty}(\T^d))$
for any $\eps > 0$,  almost surely, 
as $N \to \infty$, 
where $s(d, \al)$ is as in~\eqref{al0}.
Then, 
the basic strategy to study probabilistic local well-posedness of 
(the renormalized version of)~\eqref{NLW0}, 
at least when 
 $\al$ is sufficiently close to $ \frac d2 - 1$, 
 is to write 
a solution $u$ in the first order expansion $u = Z + v$
and study the equation satisfied by $v$:
\begin{align}
\dt^2 v +(1-\Delta) v+\NN_k(v+Z) = 0
\label{NLW1}
\end{align}

\noi
with the zero initial data.
Here, $\NN_k$ denotes the Wick renormalized nonlinearity given by
\begin{align}
\NN_k (v + Z) =  \sum_{j = 0}^k {k \choose j} v^{k-j}   :\!Z^j\!:.
\label{Wick2}
\end{align}

\noi
We point out that the main task 
in this argument is the construction of the Wick powers $:\!Z^j\!:$.
Once this is achieved, local well-posedness of \eqref{NLW1}
for $v$
follows from a standard contraction argument via
Sobolev's inequality and/or the Strichartz estimates.

Let us now consider randomization of general functions of 
negative regularity.
Given $s  < 0$, fix $(u_0, u_1) \in \H^s(\T^d) \setminus \H^0(\T^d)$ 
and let $(u_0^\o, u_1^\o)$ be its randomization
defined in \eqref{rand2}.
We then define the random linear solution $z$ by 
\begin{align}
\begin{split}
z(t) & =  \cos(t \jb{\nb}) u_0^\o + \frac{\sin(t\jb{\nb})}{\jb{\nb}} u_1^\o\\
&  = \sum_{n \in \Z^d} 
\bigg(\cos (t\jb{n}) g_n(\o) a_n + \frac{\sin(t\jb{n})}{\jb{n}} h_n(\o) b_n\bigg)e^{in \cdot x}.
\end{split}
\label{Z1}
\end{align}

\noi
Given $N \in \N$, 
we  set $z_N = \P_N z$ 
and  
\begin{align*}
\al_N (t) \stackrel{\text{def}}{=}  \E\big[ z_{N}^2(t, x) \big]
= \sum_{|n| \le N} 
\bigg(\cos^2 (t\jb{n}) |a_n|^2 + \frac{\sin^2(t\jb{n})}{\jb{n}^2} | b_n|^2\bigg)
\end{align*}

\noi
which is divergent in general as $N \to \infty$ since $(u_0, u_1) \notin \H^0(\T^3)$.  
Note that $\al_N$ depends on time in general.
For example, if we take $u_1 = \jb{\nb} u_0$, then
\begin{align*}
\al_N 
=  \sum_{|n| \le N}  |a_n|^2 \too \infty, 
\end{align*}		

\noi
as $N\to \infty$.
As in \eqref{Wick1}, 
given $j \in \N$, 
we 
define the truncated Wick power:
\begin{equation}
:\!z_{N}^j (t, x)\!:\, \,  =  H_j (z_{N}(t, x); \al_N(t)).
\label{Wick3}
\end{equation}

\noi
The following result shows that
for general  $(u_0, u_1) \in \H^s(\T^d) \setminus \H^0(\T^d)$, 
the truncated Wick powers $:\!z_{N}^j \!:$
do not converge even as  distributions
and that we need to impose additional  Fourier-Lebesgue regularity.
Given $s \in \R$ and $1 \le p \le \infty$, 
we define  the Fourier-Lebesgue space $\F L^{s, p}(\T^d)$
via the norm:
\begin{align*}
\| f\|_{\F L^{s, p}} = \| \jb{n}^s \ft f(n)\|_{\l^p(\Z^d)}
%\label{FL1}
\end{align*}

\noi
and we set $\vec {\F L}^{s, p}(\T^d)
=  {\F L}^{s, p}(\T^d)\times  {\F L}^{s-1, p}(\T^d)$.
We state our main result.

\begin{theorem}\label{THM:1}
\textup{(i)} Given $s  < 0$, fix $(u_0, u_1) \in \H^s(\T^d) \setminus \H^0(\T^d)$ 
and let $(u_0^\o, u_1^\o)$ be its randomization
defined in \eqref{rand2}.
Given 
an integer $j \ge 2$, 
 let $:\!z_{N}^j \!:$ be the truncated Wick power defined in~\eqref{Wick3}.
Let $\s \le j s$ and $p > 2$.
Suppose that 
one of the following conditions holds\textup{:}
\begin{itemize}
\item[(i.a)]
$ \s \ge - \frac d2$ %\le \s \le js$
and $2 <  p  < p_{d, j, \s} \stackrel{\textup{def}}{=}\frac{2dj}{dj+ 2\s}$, or 

\smallskip

\item[(i.b)]
$\s \le -\frac d2$ and $2 <  p \le  \frac{2j}{j-1}$ $(=  p_{d, j, -\frac  d2})$.
\end{itemize}

\noi
If, in addition, we have 
$(u_0, u_1) \in \vec{\F L}^{0, p}(\T^d)$, 
then, 
given any finite $r \ge 1$ and $T > 0$, 
the sequence $\{:\!z_{N}^j \!:\}_{N \in \N}$
converges to a limit, denoted by 
$:\!z^j \!:$, in $C([0, T]; W^{\s, r}(\T^d))$
almost surely, as $N \to \infty$.

\medskip

\noi
\textup{(ii)}
Given an integer $j \ge 2$, 
there exists  $(u_0, u_1) \in \big(\bigcap_{s < 0} \H^s(\T^d)\big) \setminus
\vec{\F L}^{0, \frac {2j}{j-1}}(\T^d)$
such that the following statements hold 
for any $\s \in  \R$, almost surely\textup{:}

\smallskip

\begin{itemize}
\item[(ii.a)]
 Given any $t \in \R$ and $T > 0$,  the truncated Wick power
 $:\!z_{N}^j (t)\!:$ defined in~\eqref{Wick3}
 does not converge to any limit
in  $C([0, T]; H^\s(\T^d))$
or  
  $C([0, T]; \mathcal D'(\T^d))$.

\smallskip

\item[(ii.b)] The sequence
 $\mathcal I (\,:\!z_{N}^j \!:\,)$ 
 does not converge to any limit in $C([0, T]; H^\s(\T^d))$
 or   $C([0, T]; \mathcal D'(\T^d))$, 
 where 
$\mathcal I$ denotes the wave Duhamel integral operator defined by 
\begin{align}
 \mathcal{I} (F) (t) = \int_0^t \frac{\sin ((t - t') \jb{\nb})}{\jb{\nb}} F(t') dt'.
\label{duhamel}
\end{align}

\end{itemize}

\noi
In particular, 
the Wick renormalized NLW 
\begin{align}
\dt^2 v +(1-\Delta) v+\NN_k(v+z) = 0, 
\label{NLW2}
\end{align}

\noi
where $\NN_k$ is as in \eqref{Wick2}, 
is probabilistically ill-posed
with respect to randomization of general functions
in the sense that the  standard solution
theory such as the first order expansion or its variant based on a higher order expansion 
fails.

\end{theorem}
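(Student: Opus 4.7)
The plan for (i) is to implement the standard Wiener chaos framework. Since $:\!z_N^j\!:(t)$ lies in the $j$-th Wiener chaos of the Gaussians $\{g_n, h_n\}$, Nelson's hypercontractivity reduces all moment bounds to the second moment. A direct application of Wick's (Isserlis') theorem yields
\begin{align*}
\E\big\|:\!z_N^j\!:(t)\big\|_{H^\sigma}^2 = j!\sum_n\jb{n}^{2\sigma}\big(\gamma(t)\ind_{B_N}\big)^{*j}(n),
\end{align*}
where $\gamma(t)(n) = \cos^2(t\jb n)|a_n|^2 + \jb n^{-2}\sin^2(t\jb n)|b_n|^2 \le |a_n|^2 + \jb n^{-2}|b_n|^2$. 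The hypothesis $(u_0, u_1) \in \vec{\F L}^{0, p}(\T^d)$ translates into $\gamma(t) \in \ell^{p/2}$ uniformly in $t$, so Young's convolution inequality places the $j$-fold convolution in $\ell^q$ with $1/q = 2j/p - (j-1)$, and Hölder against the weight $\jb n^{2\sigma}$ (which lies in $\ell^{q'}$ for $q' > d/(-2\sigma)$, or is summable when $2\sigma < -d$) produces the finite bound under exactly the conditions (i.a) and (i.b). Repeating the argument for $t - t'$ differences and for $N$-tails, together with Kolmogorov's continuity criterion and hypercontractivity (to pass from $L^2(\Omega)$ to $L^r(\Omega; W^{\sigma, r})$), upgrades this to almost-sure convergence in $C([0, T]; W^{\sigma, r}(\T^d))$.

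For part (ii), I would construct an explicit counterexample tailored to $j$. Setting $p_0 = 2j/(j-1)$, choose at each dyadic scale $l \ge 1$ an integer zero-sum tuple $p_{l, 1}, \ldots, p_{l, j} \in \Z^d$ with $|p_{l, i}|\sim 2^l$ and the $p_{l, i}$'s mutually distinct (for instance, $p_{l, i} = 2^l e_{i \bmod d} + r_{l, i}$ with bounded adjustments $r_{l, i}$ chosen so that $\sum_i p_{l, i} = 0$). Set $b_n = 0$, put $a_{\pm p_{l, i}} = c_l := l^{-1/(2j)}$ (reality forces $a_{-p} = a_p$), and $a_n = 0$ elsewhere. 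Direct computation gives $\|u_0\|_{H^s}^2 \sim \sum_l l^{-1/j}2^{2ls} < \infty$ for every $s < 0$, while
\begin{align*}
\|u_0\|_{\F L^{0, p_0}}^{p_0} \sim \sum_l l^{-p_0/(2j)} = \sum_l l^{-1/(j-1)} = \infty.
\end{align*}
For (ii.a), the second-moment formula above applied at $t = 0$ with $\sigma = 0$ and restricted to the zero Fourier mode gives
\begin{align*}
\E\big|\widehat{:\!z_N^j\!:}(0, 0)\big|^2 = j!\sum_{\substack{\sum_k n_k = 0\\|n_k|\le N}}\prod_k|a_{n_k}|^2.
\end{align*}
Isolating the bucket-$l$ contribution from the $j!$ orderings of the zero-sum tuple $(p_{l, 1}, \ldots, p_{l, j})$ gives the lower bound $(j!)^2\sum_{l:\,j\cdot 2^l\le N}c_l^{2j} = (j!)^2\sum_l l^{-1} \to \infty$ logarithmically. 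Since $\widehat{:\!z_N^j\!:}(0, 0)$ belongs to a fixed Wiener chaos, the divergence of its $L^2(\Omega)$-norm forbids convergence in probability (hence almost surely); testing any putative limit in $C([0, T]; H^\sigma)$ or $C([0, T]; \mathcal{D}')$ against $\phi \equiv 1$ at $t = 0$ then gives a contradiction, valid for every $\sigma \in \R$.

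For (ii.b), the Duhamel operator $\mathcal{I}$ only contributes a factor $\jb n^{-1}$ in Fourier, far too weak to compensate for an infinite loss of regularity, so I would argue directly. A naive PDE-inversion argument is insufficient, since integrating $\widehat{:\!z_N^j\!:}(\cdot, 0)$ against a smooth bump $\chi(t)$ in time can converge (by rapid decay of $\hat\chi$ on the non-zero frequencies $\sum\epsilon_k\jb{p_{l,i}}$) even though pointwise-in-$t$ evaluations diverge. The key is instead to identify \emph{maximally-paired} configurations inside the variance of $\widehat{\mathcal{I}(:\!z_N^j\!:)}(t_*, n_*)$: for $j$ even, take $(n_1, \ldots, n_j) = (q_1, -q_1, \ldots, q_{j/2}, -q_{j/2})$ with each $q_i\in\supp(a)$, so that $\prod_k\cos(s\jb{n_k})\cos(s'\jb{n_k})|a_{n_k}|^2 = \prod_i\cos^2(s\jb{q_i})\cos^2(s'\jb{q_i})|a_{q_i}|^4$ has nonvanishing constant Fourier mode in $s$; the time integral $\int_0^{t_*}\sin(t_*-s)\prod_i\cos^2(s\jb{q_i})\,ds$ is thus bounded below uniformly in the $q_i$'s for generic $t_* > 0$, and summing over $q_i \in\supp(a)$ yields
\begin{align*}
\E\big|\widehat{\mathcal{I}(:\!z_N^j\!:)}(t_*, 0)\big|^2 \gtrsim \Big(\sum_{l\le\log_2(N/j)} c_l^4\Big)^{j/2} = \Big(\sum_l l^{-2/j}\Big)^{j/2}\to\infty.
\end{align*}
For $j$ odd, the analogous configuration with one unpaired singleton $n_*\in\supp(a)$ and $(j-1)/2$ pairs produces divergence of $\E|\widehat{\mathcal{I}(:\!z_N^j\!:)}(t_*, n_*)|^2$ at any fixed $n_*$ for almost every $t_*$. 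Testing against $\phi = e^{in_*\cdot x}$ then yields non-convergence in $C([0, T]; \mathcal{D}')$ (and \emph{a fortiori} in $C([0, T]; H^\sigma)$) via the Wiener chaos equivalence between $L^2(\Omega)$- and almost-sure convergence. The ill-posedness of \eqref{NLW2} follows immediately: the first-order expansion $u = z + v$ places $:\!z^j\!:$ inside $\NN_k(v + z)$, which fails to exist by (ii.a), and any higher-order expansion $u = z + \mathcal{I}(:\!z^j\!:) + \cdots$ additionally requires $\mathcal{I}(:\!z^j\!:)$, which fails to exist by (ii.b). The main obstacle of the whole argument is the construction of $(u_0, u_1)$ in (ii) balancing the three competing constraints, and then pinpointing the maximally-paired combinatorial configurations that survive the time integration in $\mathcal{I}$.
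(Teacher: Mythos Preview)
Your Part (i) is precisely the paper's argument: the same second-moment identity via Hermite orthogonality, the same H\"older/Young scheme (the paper writes Young iteratively, you write it in one shot), and the same hypercontractivity-plus-Kolmogorov upgrade.

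Part (ii) is a genuinely different construction from the paper's, and the comparison is instructive. The paper takes $u_1 = \jb{\nabla}u_0$ and a \emph{tensor-product} lacunary profile $a_n = \prod_i \tilde a_{n^{(i)}}$ with $\tilde a_{2^m} = m^{-(j-1)/(2j)}$; it then lower-bounds the zero-mode variance by a nested sum handled via an iterative lemma (Lemma~2.3). Your construction instead plants, at each dyadic scale $l$, a single zero-sum $j$-tuple with weight $c_l = l^{-1/(2j)}$ and sets $u_1 = 0$. Two consequences of this difference are worth noting. First, the paper's choice $u_1 = \jb{\nabla}u_0$ makes $\gamma_n(t) = |a_n|^2$ time-independent, so the divergence in (ii.a) holds at \emph{every} $t$; with your choice $u_1 = 0$ the second moment at $t \ne 0$ carries factors $\prod_k \cos^2(t\jb{n_k})$ that are not uniformly bounded below, so your argument as written only runs at $t = 0$. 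This still suffices for non-convergence in $C([0,T];\cdot)$, but if you want the pointwise-in-$t$ statement as phrased in the theorem, replace $u_1 = 0$ by $u_1 = \jb{\nabla}u_0$ (nothing else changes). Second, and more interestingly, the paper dispatches (ii.b) in a single sentence (``the Duhamel operator gives no smoothing at the zeroth frequency''), while you correctly observe that a naive PDE-inversion does not close and instead isolate maximally-paired configurations to force a non-oscillatory piece in the time integral. Your construction is in fact \emph{better suited} to this argument than the paper's: with your weights, $\sum_q |a_q|^4 \sim \sum_l l^{-2/j} = \infty$ for every $j \ge 2$, whereas with the paper's weights $\sum_m |\tilde a_m|^4 \sim \sum_m m^{-2(j-1)/j}$ converges for $j \ge 3$, so the paired-configuration lower bound would not diverge there. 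One technical point you should make explicit: the claim that $\int_0^{t_*}\sin(t_*-s)\prod_i\cos^2(s\jb{q_i})\,ds$ is bounded below uniformly in the $q_i$'s needs the cross-terms $\cos\big(2s\sum_{i\in T}\pm\jb{q_i}\big)$ to be negligible after integration; restrict the sum to tuples with well-separated dyadic scales $l_1 < l_2 < \cdots$ so that every nontrivial signed sum $\sum_{i\in T}\pm\jb{q_i}$ has modulus $\gtrsim 2^{\max l_i}$, and the restricted sum over such tuples still diverges.
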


When $j = 2$, Theorem \ref{THM:1}\,(ii.a) and (ii.b) hold
for the  pair $(u_0, u_1) = (u_0, \jb{\nb}u_0)$
with {\it any} 
$u_0 \in \mathcal D'(\T^d) \setminus \F L^{0, 4}(\T^d)$.
See Remark \ref{REM:div1}.

Since $z \in H^s(\T^d)$, 
we expect that $:\!z^j (t)\!:$ has  at best regularity $j s$, 
and thus  the condition  $\s \le js \, (<0)$ in (i.a) is a natural one to impose.
Note that $\F L^{0, p_{d, j, \s}}(\T^d)$ scales like $H^\frac \s j(\T^d)$
where   $p_{d, j, \s}$ is as in  Theorem \ref{THM:1}\,(i),
and by 
 H\"older's inequality, we have
\begin{align*}
\| u_0 \|_{H^\frac{\s}{j}} \les \| u_0 \|_{\F L^{0, p}}
\end{align*}

\noi
for any $1 \le p < p_{d, j, \s}$ for $-\frac d2 \le \s \le s$ (here, $\s, s$ are negative).

Theorem \ref{THM:1}\,(ii) is of particular interest
since it shows existence of initial data $(u_0, u_1)$
which barely misses being in  $\H^0(\T^d)$
but for which the standard probabilistic well-posedness theory fails.
In particular, Theorem \ref{THM:1}\,(ii) shows that 
the claim in  \cite[Remark 1.2]{OPTz} is not correct.
In the context of 
the cubic NLW on $\T^3$ studied in \cite{OPTz}, 
Theorem \ref{THM:1}\,(i) provides
the following probabilistic local well-posedness result.

\begin{corollary}\label{COR:2}
Let $d = 3$ and  $k = 3$.
Given  $ -\frac 16 < s < 0$, 
let
$(u_0, u_1) \in \big(\H^s(\T^d) 
\cap \vec{\F L}^{0, p}(\T^d)\big)
\setminus \H^0(\T^d)$ 
for some $2 <  p < p_{3, 3, 3s}$, 
where $p_{d, j, \s}$ is as in (i.a) in Theorem \ref{THM:1}\,(i), 
and let $(u_0^\o, u_1^\o)$ be its randomization defined in \eqref{rand2}.
Then, almost surely, 
there exist  $T_\o > 0$
and a unique solution $v$ to 
the Wick renormalized NLW \eqref{NLW2} on the time interval $[0, T_\o]$.
\end{corollary}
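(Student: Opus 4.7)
The plan is to use Theorem~\ref{THM:1}\,(i) to build the stochastic objects and then run a standard first-order expansion $u = z + v$ as in~\cite{OPTz}. First I would check that the single hypothesis $2 < p < p_{3,3,3s}$ simultaneously supplies the assumptions of Theorem~\ref{THM:1}\,(i.a) at $j = 1, 2, 3$: a direct computation yields
\begin{align*}
p_{3, j, js} = \frac{2 \cdot 3 \cdot j}{3j + 2 \cdot js} = \frac{6}{3 + 2s}, \qquad j = 1, 2, 3,
\end{align*}
so the three thresholds coincide, and $js \ge 3s > -\tfrac12 > -\tfrac{d}{2} = -\tfrac{3}{2}$ for $s > -\tfrac16$ and $j = 1, 2, 3$, placing us in regime~(i.a) uniformly in~$j$. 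Applying Theorem~\ref{THM:1}\,(i) at $j = 2, 3$ (and a classical direct argument at $j = 1$, where no renormalization is needed) thus yields that $z$, $:\!z^2\!:$ and $:\!z^3\!:$ all belong to $C([0, T]; W^{js - \eps, r}(\T^3))$ almost surely, for every finite $r \ge 1$, every $\eps > 0$, and every $T > 0$.

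Next I would substitute $u = z + v$ into the Wick-renormalized NLW to obtain the Duhamel equation
\begin{align*}
v = -\mathcal I \bigl( v^3 + 3 v^2 z + 3 v :\!z^2\!: + :\!z^3\!: \bigr)
\end{align*}
with zero data, where $\mathcal I$ from \eqref{duhamel} gains one spatial derivative. Since $s > -\tfrac16$, I fix $\s$ with $\tfrac12 < \s < 1 + 3s$ and set up a Banach fixed-point argument for $v \mapsto -\mathcal I(\NN_3(v+z))$ on a small ball of $X_T = C([0, T]; H^\s(\T^3))$, possibly enhanced by a wave-Strichartz norm as in~\cite{OPTz}. The stochastic source $\mathcal I(:\!z^3\!:)$ lies in $C_T H^{1 + 3s - \eps} \embeds C_T H^\s$ for $\eps$ small and is harmless, while $\mathcal I(v^3)$ is controlled by the standard deterministic theory for the cubic NLW on $\T^3$, scaling-critical at $\s_c = \tfrac12$, via Sobolev embedding and/or wave-Strichartz estimates.

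The main step is the pair of mixed multilinear estimates for $v^2 z$ and $v :\!z^2\!:$. In each, the stochastic factor sits in $W^{js - \eps, r}(\T^3)$ for every finite $r$, while $v$ and $v^2$ carry positive Sobolev regularity/integrability; a standard paraproduct/fractional Leibniz argument then gives, schematically,
\begin{align*}
\bigl\| \mathcal I\bigl( v^{3-j} \, :\!z^j\!: \bigr) \bigr\|_{X_T}
\les T^\ta \, \| v \|_{X_T}^{3-j} \, \bigl\| :\!z^j\!: \bigr\|_{C_T W^{js - \eps, r}}, \qquad j = 1, 2,
\end{align*}
for some $\ta > 0$ and $r$ sufficiently large, together with the analogous bounds on differences. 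The main obstacle, once Theorem~\ref{THM:1} has furnished the stochastic data, is the $v :\!z^2\!:$ contribution: the window $\tfrac12 < \s < 1 + 3s$ is nonempty precisely when $s > -\tfrac16$, and must be chosen to simultaneously accommodate the Duhamel gain required for $:\!z^3\!:$ and the positive-regularity threshold forced by the product of $v$ with the distributional object $:\!z^2\!:$. With these bounds in hand, the contraction closes for a sufficiently small random $T_\o > 0$, producing the unique local solution $v$ claimed.
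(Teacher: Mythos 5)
Your proposal is correct and follows essentially the same route as the paper: the authors likewise invoke Theorem~\ref{THM:1}\,(i) to place $z$, $:\!z^2\!:$, $:\!z^3\!:$ in $C([0,T];W^{js,r}(\T^3))$ (noting $\mathcal I(:\!z^3\!:)\in C([0,T];H^{3s+1})$ with $3s+1>\tfrac12$), and then close a standard contraction via Strichartz and the product estimates of \cite[Lemma 3.4]{GKO}, omitting details. Your explicit verification that $p_{3,j,js}=\tfrac{6}{3+2s}$ is independent of $j$, and the identification of the window $\tfrac12<\s<1+3s$, are exactly the points the paper leaves implicit.
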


%Under the assumption of Corollary \ref{COR:2}, 
Theorem \ref{THM:1}\,(i) yields
that $ :\!z^j \!: $ almost surely belongs to  
$ C([0, T]; W^{js, r}(\T^d))$ for any finite $r \ge 1$ and $T > 0$, $j = 1, 2,  3$.
In particular, we have 
$\mathcal I (:\!z^3 \!:)
\in C([0, T]; W^{3s + 1, r}(\T^3))$ almost surely, where $3s + 1 > \frac 12$
(i.e.~subcritical regularity for the 3-$d$ cubic NLW).
Then, Corollary \ref{COR:2} 
follows from a standard contraction argument 
with the Strichartz estimates
and the product estimates in 
\cite[Lemma 3.4]{GKO}.
We omit details.

\begin{remark}\label{REM:Euc}\rm
(i) There is a gap between the sufficient conditions
given in Theorem \ref{THM:1}\,(i)
and the necessary condition
given in Theorem \ref{THM:1}\,(ii)
for convergence of the truncated Wick powers  $:\!z_{N}^j \!:$.
Moreover, Theorem \ref{THM:1}\,(i)
only discusses the construction of the Wick powers $:\!z^j \!:$.
In order to better understand probabilistic well-posedness
with general randomized initial data of negative regularity, 
it is  important to study 
multilinear smoothing 
under the Duhamel integral operator $\mathcal I$
(as in \cite{Bring, OWZ, BDNY}; see also \cite{BO96})
and more complex stochastic objects which appear in 
higher order expansions (see \cite{GKO2, OPTz, Bring})
in the case of general random initial data.
For conciseness of the presentation, we do not pursue these issues
in this paper.

\smallskip

\noi
(ii)
In \cite{CO}, Colliander and the first author studied
probabilistic well-posedness of the cubic NLS on $\T$
with the random initial data $\varphi_0^\o$ in \eqref{rand3}.
A quick investigation suggests that, 
in order to consider randomization of a general function
of negative regularity
as initial data, additional Fourier-Lebesgue is needed.
Hence, it is worthwhile to investigate if 
 an analogue of Theorem \ref{THM:1} holds for NLS on $\T^d$.

%\newpage

\smallskip

\noi
(iii) Over the last  decade,  there have also been
intense research activities
(see, for example, \cite{LM, BOP1, BOP2,  Poc,  OOP, BOP3, DLM})
on probabilistic well-posedness of nonlinear dispersive PDEs
on the Euclidean space $\R^d$, 
where random initial data is often given 
by the Wiener randomization of a given function on $\R^d$, analogous to \eqref{rand2};
see \cite{BOP1, BOP2} for the Wiener randomization procedure.
So far, there is no probabilistic well-posedness result
with respect to the Wiener randomization of a general function
of negative Sobolev regularity (without an extra assumption such as radiality),\footnote{There is a recent paper \cite{HY}, 
where the authors claim almost sure local well-posedness of the quintic NLS on $\R$ 
with respect to the Wiener randomization of a function below $L^2(\R)$, 
but unfortunately,  their proof of this result is not correct.
Their argument is based on the probabilistic bilinear Strichartz estimate
(\cite[Proposition~2.8]{HY}), where one of the functions is assumed to be {\it deterministic}, 
and it is obviously false to apply such an estimate 
in a Picard iteration argument, starting with a random linear solution.}
%We point out that by considering random initial data, analogous to \eqref{rand3}, 
%it is possible to prove almost sure local well-posedness of NLS below $L^2(\R^d)$
%in some setting.
and thus it is of interest 
to study if additional Fourier-Lebesgue regularity
is needed for probabilistic well-posedness 
 for NLW or NLS on $\R^d$ 
with respect to the Wiener randomization of a general function
of negative Sobolev regularity.

\smallskip

\noi
(iv)  A triviality result in the study of singular random PDEs says that
if one starts with regularized random initial data (or regularized stochastic forcing)
but without renormalization on a nonlinearity, 
then as one removes regularization, the corresponding solutions converge
to a trivial function or a linear solution.
See \cite{OPTz, OOR} for such triviality results on random NLW.
See also \cite{GO, OW, Ch1, Ch2} for triviality results for other dispersive PDEs
(even in the deterministic setting).
It is an intriguing question to see if the triviality results
in \cite{OPTz, OOR} extend to the case of general random initial data
in \eqref{rand2}.

\end{remark}

\subsection{On general stochastic forcing}
\label{SUBSEC:sto}\rm

Let us consider SNLW \eqref{NLW0a}.
For simplicity, we consider the zero initial data
and assume that $\Phi$ is a Fourier multiplier operator; namely, 
$\Phi(f) = \phi*f$  for some distribution $\phi$ on $\T^d$.
The basic stochastic object in the study of \eqref{NLW0a} is the stochastic convolution $\Psi$
defined by 
\begin{align}
\Psi(t) 
& = \int_0^t 
 \frac{\sin ((t - t') \jb{\nb})}{\jb{\nb}} \Phi \xi ( dt')
=  \sum_{n \in \Z^d}  \frac{\ft \phi_n I_n(t) }{\jb{n}} e^{in\cdot x}, 
\label{psi1}
\end{align}

\noi
where $I_n(t)$ is the Wiener integral given by 
\begin{align}
 I_n(t) =  \int_0^t \sin ((t - t') \jb{n}) d\be_n(t').
\label{psi1a}
\end{align}
Here, 
$\{\be_n \}_{n \in \Z^d}$ 
is defined by 
$\be_n(t) = \jb{\xi, \ind_{[0, t]} \cdot e^{in \cdot x}}_{t, x}$, 
where  $\xi$ is the space-time white noise %in \eqref{NLW0a} 
and $\jb{\cdot, \cdot}_{t, x}$ denotes 
the duality pairing on $\R_+\times \T^d$.
Namely,  $\{ \be_n \}_{n \in \Z^d}$ is a family of mutually independent complex-valued
Brownian motions conditioned that $\be_{-n} = \cj{\be_n}$, $n \in \Z^d$. 
As a consequence, we see that $\{I_n(t)\}_{n \in \Z^d}$ is a family of 
mutually independent mean-zero complex-valued Gaussian random variables
 with variance $\sim t$, 
conditioned that $I_{-n}(t) = \cj{I_n(t)}$, $n \in \Z^d$.

When $\ft \phi_n = \jb{n}^{-\al}$, 
the regularity properties of $\Psi$ in \eqref{psi1}
is essentially the same as the random linear solution $Z$ defined in 
\eqref{rand4} with the random initial data $(\varphi_0^\o, \varphi_1^\o)$ in \eqref{rand3}, 
and thus the Wick power $:\!\Psi^k\!:$ can be defined
via a limiting procedure, 
just as in $:\!Z^k\!:$, 
provided that  $\al$ is sufficiently close to $ \frac d2 - 1$.

Before we move onto the general case, 
we recall the notion of 
$\g$-radonifying operators.
We say that a Fourier multiplier operator $\Phi$ is a $\g$-radonifying operator
from $L^2(\T^d)$ to $\F L^{s, p}(\T^d)$
if $\phi \in \F L^{s, p}(\T^d)$, where $\phi$ is the convolution kernel 
of  $\Phi$.
See \cite[(1.11) and Appendix A]{FOW}
for a further discussion and references.
Then,  a slight modification of the proof of Theorem \ref{THM:1} yields
the following result.
Recall that we assume that $\Phi$ is a Fourier multiplier operator.

\begin{theorem}\label{THM:3}
Given $s  < 0$, 
let $\Phi$ be a Hilbert-Schmidt operator
from $L^2(\T^d)$ into $H^{s-1}(\T^d)$
and $\Psi$ be as in \eqref{psi1}.
Given 
an integer $j \ge 2$, 
 let $:\!\Psi_{N}^j \!: $ be the truncated Wick power defined
 as in \eqref{Wick3} \textup{(}with $\Psi_N = \P_N \Psi$
 in place of $z_N$\textup{)}.
Let $\s \le j s$ and $p > 2$.
Suppose that 
one of the following conditions holds\textup{:}
\begin{itemize}
\item[(i.a)]
$ \s \ge - \frac d2$ %\le \s \le js$
and $2  <  p  < p_{d, j, \s} \stackrel{\textup{def}}{=}\frac{2dj}{dj+ 2\s}$, or 

\smallskip

\item[(i.b)]
$\s \le -\frac d2$ and $2 <  p \le  \frac{2j}{j-1}$ $(=  p_{d, j, -\frac  d2})$.
\end{itemize}

\noi
If, in addition, 
$\Phi$ is a $\g$-radonifying operator
from $L^2(\T^d)$ to $\F L^{-1, p}(\T^d)$, 
then, 
given any finite $r \ge 1$ and $T > 0$, 
the sequence $\{:\!\Psi_{N}^j \!:\}_{N \in \N}$
converges to a limit, denoted by 
$:\!\Psi^j \!:$, in $C([0, T]; W^{\s, r}(\T^d))$
almost surely, as $N \to \infty$.

\medskip

\noi
\textup{(ii)}
Given an integer $j \ge 2$, 
there exists 
a Hilbert-Schmidt operator $\Phi$
from $L^2(\T^d)$ into $H^{s-1}(\T^d)$
for any $s < 0$, 
which is not a $\g$-radonifying operator
from $L^2(\T^d)$ into
$\F L^{-1, \frac {2j}{j-1}}(\T^d)$
such that the following statements hold 
for any $\s \in  \R$, almost surely\textup{:}

\smallskip

\begin{itemize}
\item[(ii.a)]
 Given any $t \in \R$ and $T > 0$,  the truncated Wick power
 $:\!\Psi_{N}^j (t)\!:$ 
 does not converge to any limit
in  $C([0, T]; H^\s(\T^d))$
or   $C([0, T]; \mathcal D'(\T^d))$.

\smallskip

\item[(ii.b)] The sequence
 $\mathcal I (\,:\!\Psi_{N}^j \!:\,)$ 
 does not converge to any limit in $C([0, T]; H^\s(\T^d))$
 or   $C([0, T]; \mathcal D'(\T^d))$, 
 where 
$\mathcal I$ is as in \eqref{duhamel}.

\end{itemize}

\noi
In particular, 
the Wick renormalized SNLW 
\begin{align*}
\dt^2 v +(1-\Delta) v+\NN_k(v+\Psi) = 0, 
\end{align*}

\noi
where $\NN_k$ is as in \eqref{Wick2}, 
is  ill-posed
in the sense that the  standard solution
theory such as the first order expansion or its variant based on a higher order expansion 
fails.

\end{theorem}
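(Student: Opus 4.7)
The plan is to reduce Theorem~\ref{THM:3} to Theorem~\ref{THM:1} by exploiting the fact that, at each fixed time $t$, the stochastic convolution $\Psi_N(t)$ has the same Gaussian structure as the random linear solution $z_N(t)$ after a suitable relabeling of Fourier coefficients. From \eqref{psi1}--\eqref{psi1a}, each Fourier mode $\ft\phi_n I_n(t)/\jb{n}$ of $\Psi_N(t,x)$ is a mean-zero Gaussian whose variance is a bounded-in-$t$ multiple of $|\ft\phi_n|^2/\jb{n}^2$, and the family $\{I_n(t)\}_{n \in \Z^d}$ is jointly Gaussian at fixed $t$ with the same independence and reality constraints as the family $\{g_n\}_{n \in \Z^d}$ appearing in \eqref{rand2}. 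Thus $\Psi_N(t)$ has (up to a bounded $t$-dependent factor) the law of a randomized sum of the form \eqref{rand2} with the effective Fourier coefficient $\ft\phi_n/\jb{n}$, so the $\g$-radonifying hypothesis $\phi \in \F L^{-1,p}(\T^d)$ plays exactly the role of the Fourier--Lebesgue hypothesis $(u_0,u_1) \in \vec{\F L}^{0,p}(\T^d)$ in Theorem~\ref{THM:1}.

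For part~(i), I would rerun the proof of Theorem~\ref{THM:1}(i) line by line: expand $:\!\Psi_N^j(t)\!:$ via \eqref{Wick3} into the $j$-th homogeneous Wiener chaos, apply hypercontractivity to upgrade an $L^2(\Omega)$-bound to an $L^r(\Omega)$-bound, and reduce the spatial $W^{\sigma,r}(\T^d)$-norm to a Fourier-$\ell^2$ estimate in the frequencies $n_1,\dots,n_j$. The resulting combinatorial frequency sum is identical to the one appearing in Theorem~\ref{THM:1}(i) with $a_{n_i}$ replaced by $\ft\phi_{n_i}/\jb{n_i}$, and the hypothesis $\phi \in \F L^{-1,p}(\T^d)$ supplies the Fourier $\ell^p$-summability needed to close the estimate under (i.a) or (i.b). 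Joint continuity in $t \in [0,T]$ is then obtained by a standard Kolmogorov argument on the increments $:\!\Psi_N^j(t)\!: \, - \, :\!\Psi_N^j(s)\!:$, using $\E|I_n(t)-I_n(s)|^2 \le C|t-s|$ uniformly in $n$.

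For part~(ii), I would build the multiplier $\phi$ directly from the example in Theorem~\ref{THM:1}(ii): choose $\ft\phi_n$ so that $\phi \in H^{s-1}(\T^d)$ for every $s<0$ (which is exactly the Hilbert--Schmidt condition $\Phi : L^2(\T^d) \to H^{s-1}(\T^d)$) while $\ft\phi_n/\jb{n} \notin \ell^{2j/(j-1)}(\Z^d)$ (so that $\Phi$ fails to be $\g$-radonifying into $\F L^{-1, 2j/(j-1)}(\T^d)$). A lacunary Fourier profile of the type used in Theorem~\ref{THM:1}(ii), placed just outside the critical Fourier--Lebesgue threshold, will do. The divergence assertions (ii.a) and (ii.b) then transfer verbatim: one isolates a specific low-frequency Fourier coefficient of $:\!\Psi_N^j(t)\!:$ whose variance behaves like a divergent partial sum of $|\ft\phi_n|^{2j}/\jb{n}^{2j}$-type terms, ruling out distributional convergence at a fixed $t$; and since the Duhamel operator $\mathcal I$ in \eqref{duhamel} is a Fourier multiplier of order $-1$, it cannot compensate for a divergence occurring in arbitrarily negative Sobolev spaces, giving~(ii.b).

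The main obstacle will be the honest time-dependence: unlike the random-initial-data setting, where the Gaussian vector $(g_n,h_n)$ is $t$-independent and the only $t$-dependence enters through smooth phases, here the Wiener integrals $I_n(t)$ are genuine stochastic processes in $t$ and the Hermite-polynomial centering (the analog of $\sigma_N$ in \eqref{sig1}) depends nontrivially on $t$. The renormalization must therefore be organized so that the $t$-dependent variance is absorbed at each fixed $t$ \emph{before} the Kolmogorov increment argument is applied to yield continuity in $t$; this is essentially a bookkeeping matter but is the one place where the reduction to Theorem~\ref{THM:1} is not entirely automatic.
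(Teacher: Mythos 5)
Your proposal is correct and follows essentially the same route as the paper, which proves Theorem~\ref{THM:3} precisely by observing that $\ft \phi_n$ in \eqref{psi1} plays the role of $b_n$ in \eqref{rand2} (equivalently, the $\g$-radonifying condition $\phi \in \F L^{-1,p}(\T^d)$ substitutes for $(u_0,u_1)\in\vec{\F L}^{0,p}(\T^d)$) and then rerunning the proof of Theorem~\ref{THM:1}, including the lacunary counterexample for part (ii). Your additional remarks on the $t$-dependent variance of $I_n(t)$ and the Kolmogorov continuity step are consistent with the details the paper omits.
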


By noting that  $\ft \phi_n$ in \eqref{psi1}
essentially plays a role of $b_n$ in \eqref{rand2}, 
Theorem \ref{THM:3} follows
from a straightforward modification of the proof of Theorem \ref{THM:1}
and thus we omit details.
See \cite{FOW} for an example, where local well-posedness
of the stochastic cubic NLS on $\T$
was established for singular noises by imposing 
an appropriate 
Fourier-Lebesgue
$\g$-radonifying regularity.

\begin{remark}\rm
(i) 
When $j = 2$, Theorem \ref{THM:3}\,(ii.a) and (ii.b) hold
for {\it any} Fourier multiplier operator $\Phi$ on $L^2(\T^d)$
which is not $\g$-radonifying
from $L^2(\T^d)$ into $\F L^{-1, 4}(\T^d)$.
%see Remark~\ref{REM:div1}.

\smallskip

\noi
(ii)
Consider the following stochastic nonlinear heat equation (SNLH):
\begin{equation}
\partial_t u +(1-\Delta) u+u^k = \Phi \xi.
\label{NLH}
\end{equation}

\noi
Let $\Psi_\text{heat}$ be the associated stochastic convolution:
%linear solution to \eqref{NLH}
%(with the zero initial data):
\begin{align}
\Psi_\text{heat}(t) 
& = \int_0^t 
e^{ (t - t')(\Dl - 1)} \Phi \xi ( dt')
=  \sum_{n \in \Z^d}  \ft \phi_n J_n(t) e^{in\cdot x}.
\label{psi3}
\end{align}

\noi
Here,  $J_n(t)$ is the Wiener integral given by 
\[ J_n(t) =  \int_0^t 
e^{ - (t - t')\jb{n}^2} d\be_n(t'), \] 

\noi
where $\be_n$ is as in \eqref{psi1a}.
It is easy to see that  $\{J_n(t)\}_{n \in \Z^d}$ is a family of 
mutually independent mean-zero complex-valued Gaussian random variables
 with variance $\sim \jb{n}^{-1}$, 
conditioned that $J_{-n}(t) = \cj{J_n(t)}$, $n \in \Z^d$,
and hence that 
an analogue of 
 Theorem \ref{THM:3}
also holds for $\Psi_{\text{heat}}$ in \eqref{psi3}
and SNLH \eqref{NLH}.
When $j = 2$, Part (i) of this remark also holds in this case.

\end{remark}

\section{Proof of Theorem \ref{THM:1}}

\subsection{Construction of Wick powers}
In this subsection, we present the proof of Theorem~\ref{THM:1}\,(i).
We first recall the following orthogonality relation for the Hermite polynomials
(\cite[Lemma~1.1.1]{Nua}).

\begin{lemma}\label{LEM:Wick}
Let $f$ and $g$ be mean-zero jointly Gaussian random variables with variances $\s_f$
and $\s_g$.
Then, we have 
\begin{align*}
\E\big[ H_k(f; \s_f) H_m(g; \s_g)\big] = \dl_{km} k! \big\{\E[ f g] \big\}^k.
%\label{H4}
\end{align*}
\end{lemma}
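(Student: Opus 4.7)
The plan is the classical generating-function argument. Recall that $H_k(x;\s)$ is determined by
\[
G(t, x;\s) := \exp\Bigl(tx - \tfrac{t^2\s}{2}\Bigr) = \sum_{k=0}^{\infty} \frac{t^k}{k!} H_k(x;\s),
\]
so proving the lemma reduces to computing $\E[G(s,f;\s_f)\,G(t,g;\s_g)]$ and comparing Taylor coefficients in $(s,t)$.

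The key step exploits joint Gaussianity: $sf + tg$ is a centred Gaussian of variance $s^2\s_f + 2st\,\E[fg] + t^2\s_g$, so
\[
\E\bigl[e^{sf + tg}\bigr] = \exp\!\Bigl(\tfrac{1}{2}s^2\s_f + st\,\E[fg] + \tfrac{1}{2}t^2\s_g\Bigr).
\]
Multiplying through by $\exp(-\tfrac12 s^2\s_f - \tfrac12 t^2\s_g)$ cancels the quadratic factors and leaves
\[
\E\bigl[G(s,f;\s_f)\,G(t,g;\s_g)\bigr] \;=\; e^{st\,\E[fg]} \;=\; \sum_{k=0}^{\infty} \frac{(st)^k}{k!}\bigl(\E[fg]\bigr)^k.
\]
Expanding the left-hand side via the Hermite generating function, interchanging expectation with the (absolutely convergent) double sum, and identifying the coefficient of $s^k t^m$ on both sides then yields $\E[H_k(f;\s_f)H_m(g;\s_g)] = \dl_{km}\, k!\,(\E[fg])^k$, which is exactly the claim.

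The only point requiring a word of care is the interchange of expectation with the double series. This is routine by dominated convergence on any compact $(s,t)$-neighbourhood of the origin: the partial sums of the Hermite expansions are polynomials in the Gaussian pair $(f,g)$ and hence uniformly bounded in every $L^p(\O)$, while $G(s,f;\s_f)G(t,g;\s_g)$ is jointly integrable with a Gaussian-in-$(s,t)$ dominating function. I do not anticipate any genuine obstacle — the identity is textbook, recorded here only to fix normalisations for the Wick-power computations to be carried out in the rest of the proof of Theorem~\ref{THM:1}.
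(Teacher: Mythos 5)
Your argument is correct and is precisely the classical generating-function proof of this orthogonality relation; the paper itself offers no proof, simply citing \cite[Lemma~1.1.1]{Nua}, whose proof is this same computation. The only cosmetic quibble is the justification of the interchange of expectation and summation: rather than invoking dominated convergence on the partial sums, it is cleaner to note that $\E[H_k(f;\s_f)^2]=k!\,\s_f^k$ gives $\sum_k \frac{|s|^k}{k!}\|H_k(f;\s_f)\|_{L^2} = \sum_k \frac{|s|^k \s_f^{k/2}}{\sqrt{k!}}<\infty$, so the Hermite expansion converges absolutely in $L^2(\O)$ and Fubini applies directly.
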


Let $(u_0, u_1) \notin \H^0(\T^d)$ be as in \eqref{rand1}.
Given $n \in \Z^d$ and $t \in \R$, define $\g_n(t)$ by 
\begin{align}
\g_n (t)
= \cos^2 (t\jb{n}) |a_n|^2 + \frac{\sin^2(t\jb{n})}{\jb{n}^2} | b_n|^2.
\label{Z4}
\end{align}

\noi
Then, 
from  Lemma \ref{LEM:Wick} with \eqref{Z1}, we have
\begin{align}
\begin{split}
\E\Big[|\F_x(:\!z_{N}^j (t)\!:)(n)|^2\Big]
& = \int_{\T^d_x\times \T^d_y}
\E\Big[:\!z_{N}^j (t, x)\!:   \cj{:\!z_{N}^j (t, y)\!:}\Big]
e^{ - in \cdot (x-y)} dx dy \\
& = j! \int_{\T^d_x\times \T^d_y}
\bigg(\prod_{\l = 1}^j \sum_{\substack{n_\l \in \Z^d\\|n_\l| \le N}}\g_{n_\l} (t) e^{i n_\l \cdot (x-y)}\bigg)
e^{- in \cdot (x-y)} dx dy \\
& = j! \sum_{\substack{n_\l \in \Z^d\\n = n_1 + \cdots + n_j\\|n_\l|\le N}} \prod_{\l = 1}^j\g_{n_\l} (t). 
\end{split}
\label{Z5}
\end{align}

\noi
Thus, from \eqref{Z5} and \eqref{Z4}, we have, for any $\s \in \R$, 
\begin{align}
\begin{split}
\E \Big[\| & :\!z_{N}^j (t)\!:\|_{H^\s}^2\Big]
 = \sum_{n \in \Z^d}
\jb{n}^{2\s} \E\Big[|\F_x(:\!z_{N}^j (t)\!:)(n)|^2\Big]\\
%& =  j! 
%\sum_{n \in \Z^d}
%\jb{n}^{2\s}
%\sum_{\substack{n = n_1 + \dots + n_j\\|n_\l|\le N}} \prod_{\l = 1}^j\g_{n_\l} (t)\\
& =  j! 
\sum_{\substack{|n_\l|\le N\\ \l = 1, \dots, j}} 
\jb{n_1 + \cdots +n_j}^{2\s}
\prod_{\l = 1}^j \bigg( \cos^2 (t\jb{n_\l}) |a_{n_\l}|^2 + \frac{\sin^2(t\jb{n_\l})}{\jb{n_\l}^2} | b_{n_\l}|^2\bigg).
\end{split}
\label{Z6}
\end{align}

\noi
When $\s \ge 0$, we have
 $\jb{n_1 + \cdots +n_j}^{2\s}  \les \prod_{\l = 1}^j \jb{n_\l}^{2\s}$.
 However, 
when $\s  < 0$, such an inequality is false, 
which allows us to show that the right-hand side is divergent
for a suitable choice of $(u_0, u_1)$.

Before presenting the proof of Theorem \ref{THM:1}\,(i), 
let us first consider the random initial data $(\varphi_0^\o, \varphi_1^\o)$ in~\eqref{rand3}.
In  the construction
of the Wick powers  of the truncated random linear solution $Z_N = \P_N Z$, 
where $Z$ is as in \eqref{rand4}, 
the right-hand side of \eqref{Z6} (dropping $j!$) is given by 
the following  iterated discrete convolutions:
\begin{align}
\begin{split}
\sum_{\substack{|n_\l|\le N\\\l = 1, \dots, j}} 
\jb{n_1 + \cdots +n_j}^{2\s}
\prod_{\l = 1}^j \frac{1}{\jb{n_\l}^{2(1+\al)}}.
\end{split}
\label{Z8}
\end{align}

\noi
By  iteratively carrying out summations
(via Lemma~3.4 in \cite{OOT1}), 
we see that \eqref{Z8} is uniformly bounded in $N \in \N$ for $\s <j s(d,\al) \le 0$, 
where $s(d, \al)$ is as in \eqref{al0}, 
provided that $s(d, \al)$ is sufficiently close to $0$.
By viewing 
$(\varphi_0^\o, \varphi_1^\o)$ in~\eqref{rand3}
as the randomization of a pair $(\varphi_0, \varphi_1)$ whose Fourier
coefficients are given by $\jb{n}^{-1-\al}$ and $\jb{n}^{-\al}$, respectively, 
we indeed used the Fourier-Lebesgue regularity 
of  $(\varphi_0, \varphi_1)$ in bounding \eqref{Z8}.

\begin{remark}\rm

When $2\s < -d$, we can bound \eqref{Z8} by 
\begin{align}
\sup_{n \in \Z^d} \sum_{n = n_1 + \cdots + n_j}
\prod_{\l = 1}^j \frac{1}{\jb{n_\l}^{2(1+\al)}}, 
\label{Z8a}
\end{align}

\noi
which 
yields a necessary condition 
$ 2j (1+\al) > (j-1)d$
for summability of \eqref{Z8a}.
For example,  when $d = 3$, $j = 3$,  and $\al = 0$, 
this condition is violated
which is consistent with the non-existence
of the cubic Wick power of the Gaussian free field.

\end{remark}

Let us go back to the case of general randomized initial data \eqref{rand2}
and present the proof of Theorem \ref{THM:1}\,(i).
We first consider the case $ - \frac d2 \le \s \le js$.
%which implies $s \ge - \frac d{2j}$.
Given small $\eps_0 > 0$, set finite $q \ge 1$ by 
\begin{align}
\frac{1}q = -\frac {2\s}d - \eps_0
\quad \text{such that}
\quad 2\s q< -d.
\label{Z9a}
\end{align}

\noi
Note that we used the condition 
$- \frac d2 \le \s < 0$ to guarantee that $q$ in \eqref{Z9a}
satisfies $q \ge 1$.
Then, 
from  \eqref{Z6} and H\"older's inequality with \eqref{Z9a}, we have 
\begin{align}
\begin{split}
\E \Big[\|  :\!z_{N}^j (t)\!:\|_{H^\s}^2\Big]
& \les\bigg\|\sum_{n = n_1 + \cdots + n_j}
\prod_{\l = 1}^j \bigg( |a_{n_\l}|^2 + \frac{|b_{n_\l}|^2 }{\jb{n_\l}^2} |\bigg)\bigg\|_{\l^{q'}_n}.
\end{split}
\label{Z9b}
\end{align}

\noi
In the following, we iteratively apply Young's inequality.
Let $p_0 = q'$
and 
\begin{align}
 \frac{1}{p_\l} + 1 = \frac{1}{p/2}  + \frac 1 {p_{\l+1}}, \quad \l = 0, 1, \dots, j-2
 \label{Z11}
\end{align}

\noi
with $p_{j-1} = \frac p 2>  1$.
Then, from \eqref{Z11} and \eqref{Z9a}, we have 
\begin{align}
\frac 1p = \frac{1}{2}-\frac{1}{2jq}
= \frac{dj+2\s}{2dj} +\eps_1, 
\label{Z14}
\end{align}

\noi
where $\eps_1 = \frac 1{2j}\eps_0$.
Let $c_n =  |a_{n}| + \frac{|b_n|}{\jb{n}}$
such that $\|c_n\|_{\l^p_n} \sim \| (u_0, u_1) \|_{\vec{\F L}^{0, p}}$. 
Then, 
by iteratively applying Young's inequality to \eqref{Z9b}, we obtain
\begin{align}
\begin{split}
\E \Big[\|  :\!z_{N}^j (t)\!:\|_{H^\s}^2\Big]
& \les 
\| (u_0, u_1) \|_{\vec{\F L}^{0, p}}^2
\bigg \| \sum_{m_{j-1} = n_1 + \cdots + n_{j-1}}
\prod_{\l = 1}^{j-1} \bigg( |a_{n_\l}|^2 + \frac{|b_{n_\l}|^2 }{\jb{n_\l}^2} |\bigg)
\bigg\|_{\l^{p_1}_{m_{j-1}}}\\
& \les \| (u_0, u_1) \|_{\vec{\F L}^{0, p}}^4
\bigg \| \sum_{m_{j-2} = n_1 + \cdots + n_{j-2}}
\prod_{\l = 1}^{j-2} \bigg( |a_{n_\l}|^2 + \frac{|b_{n_\l}|^2 }{\jb{n_\l}^2} |\bigg)
\bigg\|_{\l^{p_2}_{m_{j-2}}}\\
& \les \cdots \les
\| (u_0, u_1) \|_{\vec{\F L}^{0, p}}^{2(j-1)}
\bigg\| |a_{n_1}|^2 + \frac{|b_{n_1}|^2 }{\jb{n_1}^2} 
\bigg\|_{\l^{p_{j-1}}_{n_{1}}} \\
& \les \| (u_0, u_1) \|_{\vec{\F L}^{0, p}}^{2j} < \infty, 
\end{split}
\label{Z14a}
\end{align}

\noi
uniformly in $N \in \N$, 
provided that 
$(u_0, u_1) \in \vec {\F L}^{0,p}(\T^d)$ 
for some $2 <  p < \frac{2dj}{dj+2\s}$.

Next, we consider the case $\s < - \frac{d}{2}$.
In this case, from \eqref{Z9b}, we have
\begin{align*}
\E \Big[\|  :\!z_{N}^j (t)\!:\|_{H^\s}^2\Big]
& \les\bigg\|\sum_{n = n_1 + \cdots + n_j}
\prod_{\l = 1}^j \bigg( |a_{n_\l}|^2 + \frac{|b_{n_\l}|^2 }{\jb{n_\l}^2} |\bigg)\bigg\|_{\l^{\infty}_n}.
%\label{Z14b}
\end{align*}

\noi
With $p_0 = \infty$, we recursively define $p_\l$ as in \eqref{Z11}.
Then, 
from   \eqref{Z14} with $q = 1$, 
we have $p = \frac {2j}{j-1}$.
Then, 
by iteratively applying Young's inequality as in \eqref{Z14a}, we obtain
\begin{align}
\begin{split}
\E \Big[\|  :\!z_{N}^j (t)\!:\|_{H^\s}^2\Big]
& \les \| (u_0, u_1) \|_{\vec{\F L}^{0, \frac{2j}{j-1}}}^{2j} < \infty, 
\end{split}
\label{Z15}
\end{align}

\noi
uniformly in $N \in \N$.

Once we have the uniform bound \eqref{Z14a} or \eqref{Z15}, 
almost sure convergence of $:\!z_{N}^j\!:$
in $C([0, T]; W^{\s, r}(\T^d))$ for any finite $r \ge 1$
claimed in 
Theorem \ref{THM:1}\,(i)
follows 
from a standard argument,  involving 
the Wiener chaos estimate (see \cite[Proposition 2.4]{TTz}) % and \cite[Lemma 3.6]{OOT1})
and Kolmogorov's continuity criterion-type argument, 
and hence we omit details.
See, for example, \cite{GKO, GKO2, OPTz, OOTz}.

\subsection{Counterexample}
In this subsection, we present the proof of Theorem \ref{THM:1}\,(ii).
We define $u_0$ on $\T^d$
whose Fourier coefficient at 
the frequency 
 $n=(n^{(1)}, \dots, n^{(d)}) \in \Z^d$
is given by 
\begin{equation}
a_n
= \wt a_{n^{(1)}} \dots \wt a_{n^{(d)}},
\label{anA1}
\end{equation}
where
$\wt a_{n^{(i)}}$,  $i=1, \dots, d$, is defined by 
\begin{equation}
\wt a_{n^{(i)}}
= \begin{cases}
m^{-\frac{j-1}{2j}}, & \text{if there is $m \in \N$ such that } |n^{(i)}|=2^m, \\
0, & \text{otherwise.}
\end{cases}
\label{anA2}
\end{equation}

\noi
%With  $a_n$ in \eqref{anA1}, 
%we set $u_0$ as in 
%\eqref{rand1}, and set 
We set 
$u_1= \jb{\nb}u_0$.
Then, we have 
\begin{align*}
\| (u_0, u_1) \|_{\H^s}^2
&\sim
\sum_{n= (n^{(1)}, \dots, n^{(d)})}
\jb{n}^{2s} |\wt a_{n^{(1)}} \dots \wt a_{n^{(d)}}|^2
\les
\prod_{i=1}^d
\sum_{n^{(i)}=1}^\infty
\jb{n^{(i)}}^{\frac{2s}d} |\wt a_{n^{(i)}}|^2
\\
&\les
\Big(
\sum_{m=1}^\infty
2^{\frac{2s}d m} m^{-\frac {j-1}{j}}
\Big)^d
< \infty
\end{align*}

\noi
for any $s<0$.
Moreover, we have
\[
\| (u_0, u_1)  \|_{\vec {\F L}^{0, \frac{2j}{j-1}}}
\ges
\prod_{i=1}^d
\sum_{n^{(i)}=1}^\infty
|\wt a_{n^{(i)}}|^{\frac{2j}{j-1}}
=
\Big(
\sum_{m=1}^\infty m^{-1}
\Big)^d
= \infty.
\]

\noi
Hence, we conclude that  $(u_0, u_1) \in \big(\bigcap_{s < 0} \H^s(\T^d)\big) \setminus
\vec{\F L}^{0, \frac {2j}{j-1}}(\T^d)$.

Let $t \in \R$.
From \eqref{Z6}, we have 
\begin{align}
\begin{split}
\E \Big[\|  :\!z_{N}^j (t)\!:\|_{H^\s}^2\Big]
&  \sim
\sum_{\substack{|n_\l|\le N\\\l = 1, \dots, j}} 
\jb{n_1+ \dots +n_j}^{2\s}
\prod_{\l = 1}^j  |a_{n_\l}|^2 \\
&  \ge 
\sum_{|n_1|, \dots, |n_{j-1}| \le N}
\ind_{|n_1+\dots+n_{j-1}| \le N}
\Big(
\prod_{\l= 1}^{j-1}  |a_{n_\l}|^2 
\Big)
|a_{n_1+\dots+n_{j-1}}|^2, 
\end{split}
\label{Z18}
\end{align}

\noi
where the second step follows from 
considering  the contribution only for $n_1 + \cdots +n_j = 0$.

For $i=1, \dots, d$ and $\l=1, \dots, j-1$, set
\begin{align}
\Nf_\l^{(i)} := n_1^{(i)}+ \dots + n_\l^{(i)}.
\label{Z18a}
\end{align}

\noi
Noting that
\[
\bigg\{ n = (n^{(1)}, \dots, n^{(d)}) \in \Z^d : 
\max_{i=1, \dots, d} |n^{(i)} | \le \frac N{\sqrt d} \bigg\}
\subset
\{ n \in \Z^d : |n| \le N \}, 
\]
it follows from 
 \eqref{anA1} and \eqref{Z18a} that 
\begin{equation}
\begin{aligned}
\text{RHS of } \eqref{Z18}
&\ge
\prod_{i=1}^d
\bigg(
\sum_{0 \le n_1^{(i)} \le \dots \le n_{j-1}^{(i)} \le \frac{N}{\sqrt d}}
\Big(
\prod_{\l = 1}^{j-1}  |\wt a_{n_\l^{(i)}}|^2 
\Big)
|\wt a_{\Nf_{j-1}^{(i)}}|^2
\bigg).
\end{aligned}
\label{Z18b}
\end{equation}

\noi
When $j=2$ (i.e.~$\frac{2j}{j-1} = 4$),
it follows from \eqref{anA2} and \eqref{Z18b} that
\begin{align}
\text{RHS of } \eqref{Z18b}
\ges
\prod_{i=1}^d
\Big(
\sum_{n_1^{(i)}=0}^{\frac{N}{ \sqrt d}} |\wt a_{n_1^{(i)}}|^4
\Big)
\ges
\Big(
\sum_{m=2}^{[\log_2 \frac N{ \sqrt d}]}
m^{-1}
\Big)^d
\sim
(\log  \log N)^d, 
\label{Z18c}
\end{align}

\noi
where $[x]$ denotes the integer part of $x \in \R$.

Now, we consider the case $j \ge 3$.
We first state a lemma whose proof is presented at the end of this section.

\begin{lemma}\label{LEM:Oka}
Let $j \ge 3$. Then, 
there exist $N(j) \in \N$ and 
small $c_j > 0$ such that 
\begin{equation}
\sum_{\substack{
n_{2}^{(i)}, \dots, n_{j-1}^{(i)} \in \N \\
4 \le n_2^{(i)} \le \dots \le n_{j-1}^{(i)} \le \frac{N}{\sqrt d}}}
\ind_{4 \le n_1^{(i)} \le c_j \frac{N}{\sqrt d}}
\cdot 
\Big(
\prod_{\l = 2}^{j-1}  |\wt a_{n_\l^{(i)}}|^2 
\Big)
|\wt a_{\Nf_{j-1}^{(i)}}|^2
\ges (\log_2 n_1^{(i)})^{-1+\frac{j-1}j}, 
\label{Oka1}
\end{equation}

\noi
uniformly in 
$N \ge N(j)$ and 
$i = 1, \dots, d$.

\end{lemma}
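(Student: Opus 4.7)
The plan is to establish the claimed lower bound by exhibiting an explicit family of tuples $(n_2^{(i)}, \dots, n_{j-1}^{(i)})$ whose summands are nonzero and which collectively sum to the desired amount. The starting observation is that $\wt a_k$ vanishes unless $|k|$ is a power of~$2$, so every nonzero summand requires each $n_\l^{(i)}$ (for $\l = 2, \dots, j-1$) to be a power of~$2$ and, simultaneously, $\Nf_{j-1}^{(i)} = n_1^{(i)} + n_2^{(i)} + \dots + n_{j-1}^{(i)}$ to be a power of~$2$. The algebraic mechanism that makes such tuples available is the carry identity $2^k + 2^k = 2^{k+1}$: by choosing the $n_\l^{(i)}$'s so that the partial sums telescope, one produces valid tuples.

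I would focus on the relevant case $n_1^{(i)} = 2^{m_1}$ (which is the case encountered when the lemma is combined with the factor $|\wt a_{n_1^{(i)}}|^2$ in the outer summation of \eqref{Z18b}). A prototypical tuple is the \emph{doubling chain} $n_\l^{(i)} = 2^{m_1 + \l - 2}$, $\l = 2, \dots, j-1$, for which
\[
\Nf_{j-1}^{(i)} \, = \, 2^{m_1} + 2^{m_1} + 2^{m_1+1} + \dots + 2^{m_1+j-3} \, = \, 2^{m_1+j-2}.
\]
To produce many valid tuples out of this single prototype, one introduces a parameter $k$ controlling where the chain begins: for each $k$ in a range of the form $\{2, 3, \dots, m_1 - C(j)\}$ one sets the first few entries equal to $2^k$ so that they collapse up through intermediate doublings until their partial sum reaches $2^{m_1}$, after which a standard completion brings the total to a power of~$2$. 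This yields a family of valid tuples whose cardinality grows like $\log_2 n_1^{(i)}$ (or a suitable power thereof).

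The contribution of each tuple is then estimated using $|\wt a_{2^m}|^2 = m^{-\frac{j-1}{j}}$, giving a per-tuple weight of order $\prod_{\l} m_\l^{-\frac{j-1}{j}} \cdot K^{-\frac{j-1}{j}}$ where $K = \log_2 \Nf_{j-1}^{(i)}$. Summing these contributions over the parameter $k$ by a direct comparison with a geometric-like sum produces the claimed total $(\log_2 n_1^{(i)})^{-1 + \frac{j-1}{j}}$. The hypothesis $n_1^{(i)} \le c_j \frac{N}{\sqrt d}$, with $c_j$ chosen sufficiently small in terms of $j$ (so that $2^{m_1 + j - 2} \le \frac{N}{\sqrt d}$), is used precisely to guarantee that every chain entry stays within the cutoff $\frac{N}{\sqrt d}$ and that the non-decreasing ordering $n_2^{(i)} \le \dots \le n_{j-1}^{(i)}$ is preserved.

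The main obstacle is the construction step: the rigid requirement that both each $n_\l^{(i)}$ and the full sum $\Nf_{j-1}^{(i)}$ be powers of~$2$ is quite restrictive, so the parameterized family must be set up delicately in order to produce enough distinct valid tuples for the summation to reach the target $(\log_2 n_1^{(i)})^{-1/j}$. Balancing the per-tuple weight against the requisite tuple count, and matching the final exponent precisely to $-1 + \frac{j-1}{j}$, is the technical heart of the argument and will require some inductive bookkeeping in $j$.
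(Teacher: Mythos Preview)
Your approach via explicit doubling-chain tuples is genuinely different from the paper's, and it contains a fatal combinatorial gap. The requirement that every $n_\l^{(i)}$ \emph{and} the sum $\Nf_{j-1}^{(i)}$ simultaneously be powers of $2$ is far more rigid than you allow for: once $n_1^{(i)} = 2^{m_1}$ is fixed, the equation $2^{m_1} + 2^{m_2} + \cdots + 2^{m_{j-1}} = 2^M$ has only $O_j(1)$ solutions $(m_2,\dots,m_{j-1},M)$ in total. Indeed $2^M - 2^{m_1} = 2^{m_1}(2^{M-m_1}-1)$ has exactly $M-m_1$ ones in its binary expansion, so writing it as a sum of $j-2$ powers of $2$ forces $M \le m_1 + j - 2$, and for each such $M$ the number of splittings is bounded in terms of $j$ alone. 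Concretely, for $j=3$ the \emph{only} admissible tuple is $m_2 = m_1$, giving a single summand of order $m_1^{-4/3}$ against the claimed bound $m_1^{-1/3}$; for $j=4$ there are exactly two tuples, each contributing $\sim m_1^{-9/4}$, against the target $m_1^{-1/4}$. No ``parameter $k$'' ranging over $\{2,\dots,m_1-C(j)\}$ can manufacture the missing factor $m_1^{j-2}$, because the admissible tuples are simply exhausted---your chains starting at $2^k$ with $k<m_1$ never satisfy the ordering constraint together with the power-of-$2$ constraint on the total.

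The paper's proof uses a completely different mechanism. It sums from the innermost variable outward, and its first move is the monotonicity substitution $|\wt a_{n_{j-1}^{(i)}}|^2\,|\wt a_{\Nf_{j-1}^{(i)}}|^2 \ge |\wt a_{\Nf_{j-1}^{(i)}}|^4$ (appealing to the decrease of $m\mapsto|\wt a_{2^m}|$), after which the summand depends on $n_{j-1}^{(i)}$ only through the shifted variable $\Nf_{j-1}^{(i)} = \Nf_{j-2}^{(i)} + n_{j-1}^{(i)}$. As $n_{j-1}^{(i)}$ ranges over the full integer interval $[n_{j-2}^{(i)}, N/\sqrt d]$, the inner sum is declared to pick up every power of $2$ in the shifted interval, yielding the tail $\sum_m m^{-2+2/j} \gtrsim (\log_2 n_{j-2}^{(i)})^{-1+2/j}$. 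Iterating this over $n_{j-2}^{(i)}, \dots, n_2^{(i)}$, with the nested range restrictions \eqref{Oka2}, \eqref{Oka5}, \eqref{Oka6} ensuring each tail sum is long enough, produces the exponent $-1 + (j-1)/j$. The device you are missing is precisely this monotonicity substitution, which is meant to decouple $n_{j-1}^{(i)}$ from the power-of-$2$ constraint and thereby unlock logarithmically many contributing values of $\Nf_{j-1}^{(i)}$ rather than the $O_j(1)$ values your construction can reach.
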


By Lemma \ref{LEM:Oka}, we obtain

\vspace{-7mm}

\begin{align}
\begin{split}
\text{RHS of } \eqref{Z18b}
& \ges
\prod_{i=1}^d
\bigg(
\sum_{n_1^{(i)}=4}^{c_j \frac{N}{\sqrt{d}}} |\wt a_{n_1^{(i)}}|^2 
(\log_2 n_1^{(i)})^{-1+\frac{j-1}j}
\bigg)\\
& \sim
\bigg(
\sum_{m=2}^{[ \log_2 c_j \frac{N}{\sqrt{d}}]}
m^{-1}
\bigg)^d
\sim
(\log  \log N)^d, 
\end{split}
\label{Oka1a}
\end{align}

\noi
where the last step hold
for any sufficiently large $N \gg 1$ (depending on $j$).
Therefore, from 
\eqref{Z18},
\eqref{Z18b}
 \eqref{Z18c}, and \eqref{Oka1a}, 
 we conclude that
\begin{align*}
\E \Big[\|  :\!z_{N}^j (t)\!:\|_{H^\s}^2\Big]
\ge \E \Big[|  \F_x(:\!z_{N}^j (t)\!:)(0)|^2\Big]
\ges
(\log  \log N)^d \too \infty, 
%\label{Oka1b}
\end{align*}

\noi
as $N \to \infty$.
At this point, we can repeat the argument
in \cite[Subsection 4.4]{OOcomp}
with Kolmogorov's three series theorem and zero-one law
and conclude  Theorem \ref{THM:1}\,(ii.a).
We  omit details.
Since we only estimated the contribution from the zeroth frequency, 
we also conclude non-convergence in $C([0, T]; \mathcal D'(\T^d))$.
Noting that the wave Duhamel integral operator
does give any smoothing at the zeroth frequency,
we also obtain 
Theorem \ref{THM:1}\,(ii.b)

\medskip

We conclude this paper by presenting the proof of Lemma \ref{LEM:Oka}.

\begin{proof}[Proof of Lemma \ref{LEM:Oka}]

%We prove \eqref{Oka1} by induction on $j$.
We restrict the sum on the left-hand side of \eqref{Oka1}
to 
\begin{align}
4\le n_{\l}^{(i)} \le  2^{-2^\frac{j}{j-2}} \frac{N}{j \sqrt d} =: M_{j-2}
\label{Oka2}
\end{align}

\noi
for $\l = 1, \dots, j-2$ (but not for $\l = j-1$).
With \eqref{Z18a}, 
this in particular implies
\begin{align}
2^{2^\frac{j}{j-2}} 
( \Nf^{(i)}_{j-2} +n_{j-2}^{(i)})
\le 
\frac{N}{ \sqrt d}
\le \Nf^{(i)}_{j-2} + \frac{N}{ \sqrt d}.
\label{Oka3}
\end{align}

\noi
Noting that $a b \ge a + b$
for $a, b \ge 2$, it follows from 
 \eqref{Oka3} that 
\begin{align}
\big(\log_2 ( \Nf^{(i)}_{j-2} +n_{j-2}^{(i)})\big)^{-1 + \frac 2j}
\ge 2 \bigg(\log_2 \Big(\Nf^{(i)}_{j-2} + \frac{N}{ \sqrt d}\Big)\bigg)^{-1 + \frac 2j}.
\label{Oka4}
\end{align}

\noi
Hence, from  \eqref{anA2} (in particular, 
$\wt a_{n}$ restricted to $n = 2^m$, $m \in \N$, is decreasing) 
and \eqref{Oka4}, we have
\begin{align*}
& \sum_{n_{j-1}^{(i)} = n_{j-2}^{(i)}}^{\frac{N}{ \sqrt d}}
|\wt a_{n_{j-1}^{(i)}}|^2 |\wt a_{\Nf^{(i)}_{j-2} + n_{j-1}^{(i)}}|^2
\ge
\sum_{n_{j-1}^{(i)} = n_{j-2}^{(i)}}^{\frac{N}{\sqrt d}}
|\wt a_{\Nf^{(i)}_{j-2} + n_{j-1}^{(i)}}|^4
\\
& \qquad 
\ge
\sum_{m=[ \log_2 (\Nf^{(i)}_{j-2} +n_{j-2}^{(i)})]+1}^{[\log_2 (\Nf^{(i)}_{j-2} + \frac{N} {\sqrt d})]}
m^{-2 + \frac 2j}
\ges
(\log_2 n_{j-2}^{(i)})^{-1+\frac 2j}.
\end{align*}

\noi
When $j = 3$, we stop  the calculation here.
When $j \ge 4$,
we further impose
\begin{align}
4\le n_{\l}^{(i)} \le  2^{-2^\frac{j}{j-3}} M_{j-2}
=: M_{j - 3}
\label{Oka5}
\end{align}

\noi
for $\l = 1, \dots, j-3$ (but not for $\l = j-2$), 
where $M_{j-2}$ is as in \eqref{Oka2}.
Then, 
we have
\begin{align}
&
\sum_{n_{j-2}^{(i)} =n_{j-3}^{(i)}}^{M_{j-2}}
|\wt a_{n_{j-2}^{(i)}}|^2 
(\log_2 n_{j-2}^{(i)})^{-1+\frac 2j}
\ges
\sum_{m= [\log_2 n_{j-3}^{(i)}]+1}^{[\log_2 M_{j-2}]}
m^{-\frac{j-1}j} m^{-1+\frac 2j}
\ges
(\log_2 n_{j-3}^{(i)})^{-1+\frac 3j}, 
\label{Oka5a}
\end{align}

\noi
where the last step follows from \eqref{Oka5}
(which implies $\big( \log_2 n_{j-3}^{(i)}\big)^{-1 + \frac3j}
\ge 2 \big( \log_2M_{j-2}\big)^{-1 + \frac3j}$).

In general, 
suppose  $j \ge k \ge 5$
and we assume that we have repeated the procedure above $k-3$ times.
In this case, 
we further impose a condition 
\begin{align}
4\le n_{\l}^{(i)} \le  2^{-2^\frac{j}{j-(k-1)}} M_{j-(k-2)}
=: M_{j - (k-1)}
\label{Oka6}
\end{align}

\noi
for $\l = 1, \dots, j-(k-1)$ (but not for $\l = j-(k-2)$).
The condition \eqref{Oka6} guarantees
 $\big( \log_2 n_{j-(k-1)}^{(i)}\big)^{-1 + \frac{k-1}j}
\ge 2 \big( \log_2M_{j-(k-2)}\big)^{-1 + \frac{k-1}j}$,
which allows us to 
repeat the computation 
as in \eqref{Oka5a} for the $(k-2)$nd step.
By iterating this procedure, 
we obtain \eqref{Oka1}
with 
$c_j
=  \prod_{k = 3}^j2^{-2^\frac{j}{j-(k-1)}} $
which follows from 
 \eqref{Oka2}, \eqref{Oka5}, and \eqref{Oka6}.
\end {proof}

\begin{remark}\label{REM:div1}\rm
Let $j = 2$ such that $\frac {2j}{j-1} = 4$.
Given any  $u_0 \in \mathcal D'(\T^d) \setminus L^2(\T^d)$, 
by setting   $u_1= \jb{\nb}u_0$, it follows from \eqref{Z6} 
and considering the contribution only from $n_1 + n_2 = 0$
that 
\begin{align*}
\E \Big[\|  :\!z_{N}^2 (t)\!:\|_{H^\s}^2\Big]
%&  = 2
%\sum_{\substack{|n_\l|\le N\\\l = 1, 2}} 
%\jb{n_1  +n_2}^{2\s}
%\prod_{j = 1}^2  |a_{n_j}|^2 \\
&  \ges
\sum_{|n_1|\le N}
 |a_{n_1}|^4
 \too  \infty, 
\end{align*}

\noi
as $N \to \infty$, 
for any $\s \in \R$
unless $u_0 \in \F L^{0, 4}(\T^d)$.
\end{remark}

\begin{ackno}\rm
T.O.~was supported by the European Research Council (grant no.~864138 ``SingStochDispDyn"). 
M.O.~was supported by JSPS KAKENHI  Grant number JP23K03182.
O.P.~was supported by the EPSRC New Investigator Award
(grant no. EP/S033157/1).
N.T.~was partially supported by the ANR project Smooth ANR-22-CE40-0017.
\end{ackno}

\vspace*{-2mm}


\begin{thebibliography}{99}

%\baselineskip = 9.5pt

\bibitem{BOP1} A.~B\'enyi, T.~Oh, O.~Pocovnicu,
{\it Wiener randomization on unbounded domains and an application to almost sure well-posedness of NLS}, Excursions in harmonic analysis. Vol. 4, 3--25, Appl. Numer. Harmon. Anal., Birkh\"auser/Springer, Cham, 2015. 

\bibitem{BOP2} A.~B\'enyi, T.~Oh, O.~Pocovnicu,
{\it On the probabilistic Cauchy theory of the cubic nonlinear Schr\"odinger equation on $\R^d$, 
$d\ge 3$,}  Trans. Amer. Math. Soc. Ser. B 2 (2015), 1--50. 

\bibitem{BOP3} A.~B\'enyi, T.~Oh, O.~Pocovnicu,
 {\it Higher order expansions for the probabilistic local Cauchy theory of the cubic nonlinear Schr\"odinger equation on $\R^3$}, 
Trans. Amer. Math. Soc. Ser. B 6 (2019), 114--160.  

\bibitem{BOP4} A.~B\'enyi, T.~Oh, O.~Pocovnicu,
 {\it On the probabilistic Cauchy theory for nonlinear dispersive PDEs}, Landscapes of Time-Frequency Analysis. 1--32, Appl. Numer. Harmon. Anal., Birkh\"auser/Springer, Cham, 2019. 


\bibitem{BO94}
J.~Bourgain, 
{\it Periodic nonlinear Schr\"odinger equation and invariant measures,}
 Comm. Math. Phys. 166 (1994), no. 1, 1--26. 

\bibitem{BO96} J.~Bourgain,  {\it Invariant measures for the 2D-defocusing nonlinear Schr\"odinger equation},  Comm. Math. Phys. 176 (1996), 421--445. 

\bibitem{Bring}
B.~Bringmann,
{\it Invariant Gibbs measures for the three-dimensional wave equation with a Hartree nonlinearity II: Dynamics}, 
 J. Eur. Math. Soc. (JEMS) 26 (2024), no. 6, 1933--2089.


\bibitem{BDNY}
B.~Bringmann, Y.~Deng, A.~Nahmod, H.~Yue,
{\it 
Invariant Gibbs measures for the three dimensional cubic nonlinear wave equation}, 
 Invent. Math. 236 (2024), no. 3, 1133--1411. 


\bibitem{BTTz}
N.~Burq, L.~Thomann, N.~Tzvetkov, 
{\it Global infinite energy solutions for the cubic wave equation,} Bull. Soc. Math. France 143 (2015), no. 2, 301--313.




\bibitem{BT1} N.~Burq,  N.~Tzvetkov, {\it Random data Cauchy theory for supercritical wave equations I. Local theory}, Invent. Math. 173 (2008), 449--475.
 
\bibitem{BT3}  N.~Burq, N.~Tzvetkov, {\it  Probabilistic well-posedness for the cubic wave equation}, J. Eur. Math. Soc. 16 (2014), 1--30.





\bibitem{Ch1}
A.~Chapouto, 
{\it A remark on the well-posedness of the modified KdV equation in the Fourier-Lebesgue spaces}, 
 Discrete Contin. Dyn. Syst. 41 (2021), no. 8, 3915--3950. 

\bibitem{Ch2}
A.~Chapouto, 
{\it A refined well-posedness result for the modified KdV equation in the Fourier-Lebesgue spaces},
 J. Dynam. Differential Equations 35 (2023), no. 3, 2537--2578. 


\bibitem{CCT}
M.~Christ, J.~Colliander, T.~Tao, 
{\it Ill-posedness for nonlinear Schr\"odinger and wave equations}, 
arXiv:math/0311048 [math.AP].






\bibitem{CO}
J.~Colliander,
T.~Oh,  
{\it Almost sure well-posedness of the cubic nonlinear Schr\"odinger equation below $L^2(\T)$}, Duke Math. J. 161 (2012), no. 3, 367--414. 





\bibitem{DDT}
A.~de Bouard, A.~Debussche, Y.~Tsutsumi, 
{\it Periodic solutions of the Korteweg-de Vries equation driven by white noise}, SIAM J. Math. Anal. 36 (2004/05), no. 3, 815--855. 



\bibitem{DNY2}
Y.~Deng, A.~Nahmod, H.~Yue,
{\it 
 Invariant Gibbs measures and global strong solutions for nonlinear Schr\"odinger equations in dimension two}, 
to appear in  Ann. of Math.
 

\bibitem{DNY3}
Y.~Deng, A.~Nahmod, H.~Yue,
{\it 
Random tensors, propagation of randomness, and nonlinear dispersive equations,} Invent. Math. 228 (2022), no. 2, 539--686.
 


\bibitem{DLM}
B.~Dodson, J.~L\"uhrmann, D.~Mendelson, 
{\it Almost sure local well-posedness and scattering for the 4D cubic nonlinear Schr\"odinger equation,}
Adv. Math. 347 (2019), 619--676.



\bibitem{LP}
L.~Forcella, O.~Pocovnicu, 
{\it Invariant Gibbs dynamics for two-dimensional fractional wave equations in negative Sobolev spaces}, 
arXiv:2306.07857 [math.AP].


\bibitem{FOW}
 J.~Forlano, T.~Oh, Y.~Wang,
 {\it  Stochastic cubic nonlinear Schr\"odinger equation with almost space-time white noise}, J. Aust. Math. Soc. 109 (2020), no. 1, 44--67. 

\bibitem{FOk}
J.~Forlano, M.~Okamoto, 
{\it A remark on norm inflation for nonlinear wave equations}, 
 Dyn. Partial Differ. Equ.
 17 (2020), no. 4, 361--381.





\bibitem{GKO} M.~Gubinelli, H.~Koch, T.~Oh, 
 {\it Renormalization of the two-dimensional stochastic nonlinear wave equation},   Trans. Amer. Math. Soc. 370 (2018), 7335--7359.


\bibitem{GKO2} M.~Gubinelli, H.~Koch, T.~Oh, 
{\it Paracontrolled approach to the three-dimensional stochastic nonlinear wave equation with quadratic nonlinearity},
J. Eur. Math. Soc. (2023). doi: 10.4171/JEMS/1294 




\bibitem{GKOT}
M.~Gubinelli, H.~Koch, T.~Oh, L.~Tolomeo,
{\it Global dynamics for  the two-dimensional stochastic nonlinear wave equations,}
 Int. Math. Res. Not. 2022, no. 21, 16954--16999. 




\bibitem{GO}
Z.~Guo, T.~Oh,
{\it  Non-existence of solutions for the periodic cubic nonlinear Schr\"odinger equation below $L^2$}, 
Internat. Math. Res. Not. 2018, no.6, 1656--1729. 






\bibitem{HY}
G.~Hwang, H.~Yoon, 
{\it Probabilistic local wellposedness of 1D quintic NLS below $L^2(\R)$}, 
J. Math. Anal. Appl. 527 (2023), no. 1, part 1, Paper No. 127195, 21 pp. 






\bibitem{LTW}
R.~Liu, N.~Tzvetkov, Y.~Wang,
{\it  Existence, uniqueness, and universality of global dynamics for the fractional hyperbolic $\Phi^4_3$-model}, preprint. 



\bibitem{LM}
J.~L\"uhrmann, D.~Mendelson,
{\it Random data Cauchy theory for nonlinear wave equations of power-type on $\mathbb{R}^3$},
 Comm. Partial Differential Equations 39 (2014), no. 12, 2262--2283.






\bibitem{Nua}
D.~Nualart, 
{\it The Malliavin calculus and related topics,}
 Second edition. Probability and its Applications (New York). Springer-Verlag, Berlin, 2006. xiv+382 pp. 



\bibitem{Oh4}
T.~Oh, 
{\it 
Periodic stochastic Korteweg-de Vries equation with additive space-time white noise}, Anal. PDE 2 (2009), no. 3, 281--304. 

\bibitem{OOcomp}
T.~Oh, M.~Okamoto,
{\it  Comparing the stochastic nonlinear wave and heat equations: a case study},
Electron. J. Probab. 26 (2021), Paper No. 9, 44 pp. 



\bibitem{OOP}
T.~Oh, M.~Okamoto, O.~Pocovnicu,
{\it On the probabilistic well-posedness of the nonlinear Schr\"odinger equations with non-algebraic nonlinearities}, Discrete Contin. Dyn. Syst. A. 39 (2019), no. 6, 3479--3520. 

\bibitem{OOR}
T.~Oh, M.~Okamoto, T.~Robert,
{\it  A remark on triviality for the two-dimensional stochastic nonlinear wave equation}, Stochastic Process. Appl. 130 (2020), no. 9, 5838--5864. 



\bibitem{OOT1}
T.~Oh, M.~Okamoto, L.~Tolomeo,
{\it Focusing $\Phi^4_3$-model with a Hartree-type nonlinearity}, 
to appear in Mem. Amer. Math. Soc.


\bibitem{OOT2}
T.~Oh, M.~Okamoto, L.~Tolomeo,
{\it Stochastic quantization of the $\Phi^3_3$-model}, 
to appear in Mem. Eur. Math. Soc.



\bibitem{OOTz}
T.~Oh, M.~Okamoto, N.~Tzvetkov,
{\it  Uniqueness and non-uniqueness of the Gaussian free field evolution under the two-dimensional Wick ordered cubic wave equation,} 
 Ann. Inst. Henri Poincar\'e Probab. Stat.
 60 (2024), no. 3, 1684--1728. 


\bibitem{OP1}
T.~Oh, O.~Pocovnicu,
{\it Probabilistic global well-posedness of the energy-critical defocusing quintic nonlinear wave equation on $\R^3$}, J. Math. Pures Appl. 105 (2016), 342--366. 

\bibitem{OP2}
T.~Oh, O.~Pocovnicu,
{\it  A remark on almost sure global well-posedness of the energy-critical defocusing nonlinear wave equations in the periodic setting}, Tohoku Math. J. 69 (2017), no.3, 455--481. 


\bibitem{OPTz}
T.~Oh, O.~Pocovnicu, N.~Tzvetkov,
{\it  Probabilistic local Cauchy theory of the cubic nonlinear wave equation in negative Sobolev spaces}, 
 Ann. Inst. Fourier (Grenoble) 72 (2022) no. 2, 771--830. 


%\bibitem{OQS}
%T.~Oh, J.~Quastel, P.~Sosoe,
%{\it  Global dynamics for the stochastic KdV equation with white noise as initial data}, 
%arXiv:2308.04576 [math.AP].


\bibitem{ORSW}
T.~Oh, T.~Robert, P.~Sosoe, Y.~Wang,
{\it  On the two-dimensional hyperbolic stochastic sine-Gordon equation}, 
Stoch. Partial Differ. Equ. Anal. Comput. 9 (2021), 1--32. 

\bibitem{ORSW2}
T.~Oh, T.~Robert, P.~Sosoe, Y.~Wang,
{\it 
 Invariant Gibbs dynamics for the dynamical sine-Gordon model}, Proc. Roy. Soc. Edinburgh Sect. A 151 (2021), no. 5, 1450--1466. 


\bibitem{ORW}
T.~Oh, T.~Robert, Y.~Wang,
{\it  On the parabolic and hyperbolic Liouville equations}, Comm. Math. Phys. 387 (2021), no. 3 1281--1351. 




\bibitem{OTh1}
T.~Oh, L.~Thomann, 
{\it 
 A pedestrian approach to the invariant Gibbs measure for 
the 2-$d$ defocusing nonlinear Schr\"odinger equations}, 
Stoch. Partial Differ. Equ. Anal. Comput. 6 (2018), 397--445.




\bibitem{OTh2} T.~Oh, L.~Thomann, 
{\it  Invariant Gibbs measures for the 2-$d$ defocusing nonlinear wave equations}, 
 Ann. Fac. Sci. Toulouse Math.
 29 (2020), no. 1, 1--26. 

\bibitem{OTW} T.~Oh,  N. Tzvetkov, Y. Wang,   {\it Solving the 4NLS with white noise initial data}, 
Forum Math. Sigma.
 8 (2020), e48, 63 pp. 



\bibitem{OW}
T.~Oh, Y.~Wang,
{\it  Global well-posedness of the periodic cubic fourth order NLS in negative Sobolev spaces},
 Forum Math. Sigma 6 (2018), e5, 80 pp. 

\bibitem{OWZ}
T.~Oh,  Y.~Wang, Y.~Zine,
{\it  Three-dimensional stochastic cubic nonlinear wave equation with almost space-time white noise},
Stoch. Partial Differ. Equ. Anal. Comput. 10 (2022), 898--963.

\bibitem{OZ}
T.~Oh,   Y.~Zine,
{\it A note on products of stochastic objects}, 
to appear in Kyoto J. Math. 



\bibitem{Poc}
O.~Pocovnicu, 
{\it Almost sure global well-posedness for the energy-critical defocusing nonlinear wave equation
 on $\R^d$, $d=4$ and $5$,} J. Eur. Math. Soc. (JEMS) 19 (2017), no. 8, 2521--2575. 





\bibitem{STX}
C.~Sun, N.~Tzvetkov, W.~Xu, 
{\it 
Weak universality results for a class of nonlinear wave equations}, 
to appear in Ann. Inst, Fourier. 


\bibitem{TTz} L.~Thomann, N.~Tzvetkov,  {\it Gibbs measure for the periodic derivative nonlinear Schr\"odinger equation}, Nonlinearity 23 (2010), no. 11, 2771--2791.




\end{thebibliography}
\end{document}